\newtheorem{theorem}{Theorem}[section]
\newtheorem{lemma}[theorem]{Lemma}
\newtheorem{proposition}{Proposition}
 \numberwithin{equation}{section}
\newtheorem{remark}{Remark}
\newcommand{\keywords}
\def\bc{\begin{center}}       \def\ec{\end{center}}
\def\ba{\begin{array}}        \def\ea{\end{array}}
\def\be{\begin{equation}}     \def\ee{\end{equation}}
\def\bea{\begin{eqnarray}}    \def\eea{\end{eqnarray}}
\def\beaa{\begin{eqnarray*}}  \def\eeaa{\end{eqnarray*}}
\def\mathbb{\Bbb}
\begin{document}

\title{\LARGE \bf Qualitative analysis of stationary Keller--Segel chemotaxis models with logistic growth}
\author{Qi Wang \thanks{Department of Mathematics, Southwestern University of Finance and Economics, Chengdu, Sichuan 611130, China
        ({\tt qwang@swufe.edu.cn}).  QW is partially supported by NSF-China (Grant No. 11501460) and the Project (No.15ZA0382) from Department of Education, Sichuan China.}, Jingda Yan \thanks{Hanqing Advanced Institute of Economics and Finance, Renmin University of China, No. 59 Zhongguancun Street, Haidian District, Beijing 100872, China ({\tt jdyan@ruc.edu.cn}).}, Chunyi Gai \thanks{Department of Mathematics and Statistics, Dalhousie University, 6316 Coburg Road, Halifax, Nova Scotia, B3H 4R2, Canada ({\tt chunyi.gai@dal.ca}).}
}
\date{}
\maketitle

\abstract
We study the stationary Keller--Segel chemotaxis models with logistic cellular growth over a one-dimensional region subject to the Neumann boundary condition.  We show that nonconstant solutions emerge in the sense of Turing's instability as the chemotaxis rate $\chi$ surpasses a threshold number.  By taking the chemotaxis rate as the bifurcation parameter, we carry out bifurcation analysis on the system to obtain the explicit formulas of bifurcation values and small amplitude nonconstant positive solutions.  Moreover we show that solutions stay strictly positive in the continuum of each branch.  The stabilities of these steady state solutions are well studied when the creation and degradation rate of the chemical is assumed to be a linear function.  Finally we investigate the asymptotic behaviors of the monotone steady states.  We construct solutions with interesting patterns such as a boundary spike when the chemotaxis rate is large enough and/or the cell motility is small.

\textbf{Keywords: Chemotaxis, logistic growth, steady state, bifurcation analysis, asymptotic behavior}\\
\textbf{AMS Class: 92C17, 35B20, 35B32, 35B40}

\section{Introduction and preliminary results}\label{section1}
\indent
In this paper, we investigate stationary Keller--Segel type chemotaxis models with cellular growth in the following form
\begin{equation}\label{11}
\left\{
\begin{array}{ll}
(D_1 u' - \chi \Phi(u,v)v')'+(\bar{u}-u)u=0,  & x \in (0,L),\\
D_2 v''-v+h(u)=0, & x \in (0,L),\\
u'(x)=v'(x)=0,&x=0,L,\\
\end{array}
\right.
\end{equation}
where $D_1$, $D_2$, $\chi$ and $\bar u$ are positive constants.  $u,v$ are functions of $x$ and $\Phi(y,v)$, $h(u)$ are smooth functions.

Chemotaxis is the directed movement of microorganisms along the gradient of certain chemical stimulus which may be produced by the cells and consumed by certain enzyme.  It has attracted significant interest from numerous scientists over the past few decades due to its important applications in a wide range of biological phenomena, such as wound healing, embryonic development and cancer growth of tumour cells \cite{BWS,CP,DW,N}.

Mathematical modelling and theoretical analysis of chemotaxis date to the pioneering works of Keller and Segel \cite{KS,KS1,KS2} in the early 1970s.  In its original form, the Keller--Segel type chemotaxis model consists of four reaction-advection-diffusion equations which can be reduced into two coupled nonlinear PDEs.  One is a convection-diffusion equation for the cell population density and the other one is a reaction-diffusion equation for the chemical concentration.  Let $\Omega $ be a bounded domain in $\mathbb{R}^N$, $N \geq 1$ and denote by $u(x,t)$ the cell population density and by $v(x,t)$ the chemical concentration at location $x$ and time $t$ respectively.  Then the general form of a classical Keller--Segel chemotaxis system reads
\begin{equation}\label{12}
\left\{
\begin{array}{ll}
u_t=\nabla\cdot(D_1(u,v) \nabla u - \chi \Phi(u,v) \nabla v),  & x \in\Omega,~t>0,\\
v_t=D_2 (u,v) \Delta v + g(u,v), & x \in\Omega,~t>0,\\
\end{array}
\right.
\end{equation}
where $D_1>0$ is the so-called cell \emph{motility} and it interprets the tendency of cells to move randomly over the domain $\Omega$, and $D_2>0$ is the diffusion rate of the chemical.  The constant $\chi$ measures the strength of influence of the chemical on the directed cellular movement; moreover $\chi>0$ if the chemical is attractive to the cells and $\chi<0$ if the chemical is repulsive.  We will focus on the former case in this paper and assume that $\chi>0$ from now on.  $\Phi(u,v)$ is called the \emph{sensitivity function} and it reflects the variation of cellular sensitivity to the stimulus with respect to the population density of cells and levels of chemical concentration.  $g(u,v)$ is the creation and degradation rate of the chemical.

One of the most important and interesting phenomena of chemotaxis is the cellular aggregation, in which initially evenly distributed cells merge with each other and eventually aggregate into one or several groups.  The variation of the functions in (\ref{12}) allows this Keller--Segel model to admit very rich spatial--temporal dynamics from the view point of mathematical analysis.  It can induce various interesting and striking properties such as global existence, finite time blow-ups etc. to model the cellular aggregation.  Moreover, this intuitively simple system successfully demonstrates its ability in presenting solutions with spatial patterns even in its simplest case.  Furthermore, (\ref{12}) is able to explain the phenomenon of wave propagation of bands of certain bacteria under the influence of a chemical.

Time-dependent system in the form of (\ref{12}) can describe cell aggregation when the solutions blow up with their $L^\infty$ norms going to infinity within finite or infinite time.  Then the aggregation is simulated by a single $\delta$-function or a linear combination of several $\delta$-functions \cite{CP,HV,HW,N}.  An alternative way proposed is to show that the time-dependent system (\ref{12}) admits global-in-time solutions which converge to bounded steady states.  Moreover, these steady states can create interesting patterns such as spikes, or transition layers, which can be used to model the cell aggregations.  For $N\geq1$, Ni and Takagi \cite{NT,NT2} converted a chemotaxis steady state system into a single equation, of which they obtained nonconstant positive solutions by variational method.  Moreover, they constructed a steady state solution that concentrates on the most curved part of the boundary as the chemical diffusion rate shrinks to zero.  See the survey paper \cite{N2} for works and recent developments in this direction.  For $N=1$, Wang \cite{W} initiated a method, later developed in \cite{CKWW,WX}, to apply the global bifurcation to a class of chemotaxis systems without reducing them into a single equation.  It is proved that the steady states form a boundary spike (in the form of a $\delta$-function) or a transition layer (in the form of a step function) if the chemotaxis rate $\chi$ is sufficiently large.  We want to mention that system (\ref{12}) is also able to model the formation of chemotactic bands, which are represented mathematically by travelling wave solutions when the sensitivity is $\Phi=\frac{u}{v}$.  We will not consider the problem in this paper, and for the results on a variants of system (\ref{12}), see the survey papers \cite{HP,Ho,Ho1} for works in this direction and \cite{Wz} for recent developments.  We also want to point out that there are works \cite{BEG, CEV1,CEV2,Ho3, TW1, TW2,WZYH, WW} on Keller--Segel chemotaxis models with two competing species.

System (\ref{12}) has the feature that the total population of cells is preserved since the cellular growth and proliferation have been ignored so far.  This can be a reasonable assumption on the modeling of some chemotaxis systems.  For example, cell proliferation through divisions stops during the aggregation stages of \emph{Dictyostelium Discoideum}.  However Keller--Segel chemotaxis models with the cellular kinetic term have also been proposed and studied and the simplified system of this type over bounded domain $\Omega$ reads as follows
\begin{equation}\label{13}
\left\{
\begin{array}{ll}
u_t=\nabla\cdot(D_1  \nabla u - \chi u\phi(v) \nabla v)+f(u),  & x \in\Omega,~t>0,\\
v_t=D_2 \Delta v -\alpha v+\beta u, & x \in\Omega,~t>0,\\
\frac{\partial u}{\partial \textbf{n}}=\frac{\partial v}{\partial \textbf{n}}=0,&x\in\partial \Omega,~t>0,\\
u(x,0) \geq0,~v(x,0) \geq0,&x\in\Omega,
\end{array}
\right.
\end{equation}
where $D_1$ and $D_2$ are positive constants.  $\alpha> 0$ is a constant that measures the consumption rate of the chemical and $\beta > 0$ interprets phenomenon that the chemicals are released by cells.  This phenomenon is interesting in that the global level patterns of the system emerge through the low level self-organization processes.  The domain boundary $\partial \Omega$ is assumed to be smooth with outer normal $\textbf{n}$.  Here the homogeneous Neumann boundary conditions mean that this bounded region in enclosed and migration and chemical flux .  Moreover the initial data $u(x,0)$ and $v(x,0)$ are assumed to be nonnegative and not identically zero.

To study the effect of the cellular growth on the dynamics of system (\ref{13}), a typical form of $f(u)$ to choose is the logistic function $f(u)= u(\theta-\mu u)$, where $\theta$ and $\mu$ are positive constants.  For $N=1$, it is known from \cite{OY} that all solutions of (\ref{13}) exist globally and are uniformly bounded in time.  For $N=2$, Osaki \emph{et al}. \cite{OTYM} proved the existence of global solutions and obtained an globally exponential attractor of (\ref{13}) provided that the sensitivity function $\phi(v)$ is smooth and has uniformly bounded derivatives up to second order.  For $N\geq3$, Winkler \cite{Wk} has established the unique global solution for all smooth initial data if $\mu$ is sufficiently large.  It seems necessary to point out that, it is demonstrated in \cite{Wk2} that a superlinear growth condition on $f(u)$ may be insufficient to prevent finite time blow-ups for a parabolic-elliptic system of (\ref{13}).

For $f(u)=u(1-u)(u-a)$, $a\in(0,\frac{1}{2})$ with Allee effect, Mimura and Tsujikawa \cite{MT} studied the aggregating patterns of (\ref{13}) when the diffusion rate $D_1$ and the chemotaxis rate $\chi$ are both small enough. Henry \emph{et al.} \cite{HHS} proved the convergence of the solutions and the formation of viscous solutions in the singular limit of a scaling.  Models with other types of $f(u)$ have been considered in \cite{HP,Ho,Ho1,NO,W} and the references therein.

It is the goal of this paper to study the steady states (stationary solutions) of the following general system with a logistic cell growth,
\begin{equation}\label{14}
\left\{
\begin{array}{ll}
u_t=\nabla\cdot(D_1  \nabla u - \chi \Phi(u,v) \nabla v)+u(\theta-\mu u) ,  & x \in\Omega,~t>0,\\
v_t=D_2 \Delta v -\alpha v+h(u), & x \in\Omega,~t>0,\\
\frac{\partial u}{\partial \textbf{n}}=\frac{\partial v}{\partial \textbf{n}}=0, &x\in\partial \Omega,~t>0,\\
u(x,0)=u_0(x)\geq0,~v(x,0)=v_0(x)\geq0,&x\in\Omega.
\end{array}
\right.
\end{equation}
As we see in the aforementioned results, the global-in-time solutions and the exponential attractor of (\ref{14}) have already been obtained by various authors, at least for $\Omega \in \mathbb{R}^N$, $N\leq 2$.  However, there are only few results concerning the steady states of (\ref{14}).  For $D_1=D_2=1$, $\Phi(u,v)=g(u)=h(u)=u$ and $D_1=D_2=\alpha=1$, Tello and Winkler \cite{TW} obtained infinitely many branches of local bifurcating solutions to the stationary problem for all $\mu>0$ if $N\leq 4$ and for all $\mu>\frac{N-4}{N-2}\chi$ if $N>4$.  For $\Phi(u,v)=h(u)=u$, taking $\chi$ as the bifurcation parameter, Kuto \emph{et al.} \cite{KOST} construct local bifurcation branches of strip and hexagonal steady states when the domain $\Omega$ is a rectangle in $\mathbb R^2$.  Moreover, the direction of the pitchfork bifurcation branch is also determined there.  Ma \emph{et al.} \cite{MOW} studied the model with a volume-filling effect, where $\Phi(u)=u(1-u)$ and $h(u)=\beta u$ for $\beta>0$ being a constant.  They carried out the local bifurcation analysis and established a selection mechanism of the wave modes for $\Omega$ being an interval in $\mathbb{R}^1$.  They also showed that the bifurcating solution is stable only at the very branch the principal wave mode of which is a positive integer that minimizes the bifurcation parameter $\chi$.  Very recently model (\ref{14}) over multi-dimension is studied in \cite{JWZ}.  We notice that none of these papers carried out global bifurcation analysis on the steady states of (\ref{14}).

In this paper, we study the stationary problem of the general system (\ref{14}) and we are concerned with the existence and stability of the spatially inhomogeneous positive solutions.  In particular, we are interested in the positive steady states that have interesting patterns such as boundary spikes or transition layers, which can be used to model the cell aggregation phenomenon.

It is easy to see that (\ref{11}) is a stationary system of (\ref{14}) over a one-dimensional domain.  Actually, we introduce the new variables
\[\tilde{D}_1=\frac{D_1}{\mu},\tilde{\chi}=\frac{\chi}{\mu},\tilde{D}_2=\frac{D_2}{\alpha},\tilde{h}(u)=\frac{h(u)}{\alpha},~\bar{u}=\frac{\theta}{\mu},\]
then system (\ref{14}) becomes
\begin{equation}\label{15}
\left\{
\begin{array}{ll}
u_t=\nabla\cdot(D_1  \nabla u - \chi \Phi(u,v) \nabla v)+u(\bar{u}- u) ,  & x \in\Omega,~t>0,\\
v_t=D_2 \Delta v -  v+h(u), & x \in\Omega,~t>0,\\
\frac{\partial u}{\partial \textbf{n}}=\frac{\partial v}{\partial \textbf{n}}=0, &x\in\partial \Omega,~t>0,\\
u(x,0)=u_0(x)\geq0,~v(x,0)=v_0(x)\geq0,&x\in\Omega,
\end{array}
\right.
\end{equation}
where we have dropped the tildes without confusing the reader.  Then we see that (\ref{15}) reduces to (\ref{11}) when $\Omega=(0,L)$.  Note that at least one of $D_1$ and $D_2$ can be scaled to 1 in (\ref{15})while we keep both of them here because they play essential roles in the dynamics of the system as we shall see later on.  Throughout this paper, we make the following assumptions on the sensitivity function $\Phi(u,v)$ for the sake of biological and mathematical considerations:
\begin{equation}\label{16}
\Phi\in C^4(\mathbb{R}\times \mathbb{R},\mathbb{R}),~\Phi(0,0)=0,\Phi(u,v)\geq 0, \Phi_v(u,v)\leq 0, \text{ for all } u,v\geq0,
\end{equation}
and there exists a positive constant $C_1$ such that
\begin{equation}\label{17}
0<\Phi(u,v)\leq C_1u,\text{ for all } u, v>0;
\end{equation}
we also assume that
\begin{equation}\label{18}
h\in C^3(\mathbb{R},\mathbb{R}),~h(0)=0,~h'(u)\geq 0, \text{ for } u\geq0,
\end{equation}
and there exists $C_2>0$ such that
\begin{equation}\label{19}
h(u)\leq C_2u, \text{ for } u\geq0.
\end{equation}
It is easy to see that (\ref{15}) has two equilibria, the trivial one $(0,0)$ and the positive one
\begin{equation}\label{110}
(\bar{u},\bar{v})=(\bar u,h(\bar u)).
\end{equation}
In the absence of chemotaxis (i.e, $\chi=0$), it is well known that $(0,0)$ is unstable and the positive equilibrium $(\bar{u},\bar{v})$ is globally asymptotically stable.  Therefore, system (\ref{15}) does not have any nonconstant steady state when $\chi=0$.  Moreover, from the viewpoint of standard perturbation arguments this conclusion also holds for $\chi>0$ being small, see \cite{Kato,Sim} e.g.  For example, the proof of Theorem 5.1 and Corollary 5.2 in \cite{TW} can be applied to system (\ref{15}) and we have the following result.
\begin{theorem}\label{theorem11}(Theorem 5.1 and Corollary 5.2 \cite{TW})
Assume that conditions (\ref{16})-(\ref{110}) are satisfied.  There exists a positive number $\chi_*$ such that (\ref{15}) has no nonconstant solution if $\chi\in(0,\chi_*)$.
\end{theorem}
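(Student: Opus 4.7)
The plan is a contradiction--compactness argument, closed out by an implicit function theorem at the constant equilibrium $(\bar u, h(\bar u))$. Assume the conclusion fails; then there exist $\chi_n \downarrow 0$ and nonconstant nonnegative solutions $(u_n, v_n)$ of (\ref{4}). I would show that $(u_n, v_n) \to (\bar u, h(\bar u))$ in $C^2$ along a subsequence, and then invoke the invertibility of the linearized operator at this constant to forbid any nonconstant nearby solution.

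The first step is uniform a priori bounds. For $\chi$ in a bounded range, every nonnegative solution of (\ref{4}) should satisfy $\|u\|_{L^\infty} + \|v\|_{L^\infty} \le K$: integrating the $u$-equation yields $\int_\Omega u(u - \bar u)\,dx = 0$, which controls $\|u\|_{L^1}$; elliptic regularity for the $v$-equation together with (\ref{8}) converts $L^p$ bounds on $u$ into $W^{2,p}$ bounds on $v$; and an $L^p$-bootstrap on the $u$-equation, with the chemotactic flux handled via (\ref{6}), closes the estimate. Schauder theory then upgrades this to a uniform $C^{2,\alpha}$ bound independent of $\chi$. By Arzel\`a--Ascoli, a subsequence of $(u_n, v_n)$ converges in $C^2$ to some $(u_\infty, v_\infty)$ that solves the decoupled system obtained from (\ref{4}) at $\chi = 0$. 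Testing the limiting $u$-equation against $u_\infty - \bar u$ produces
\begin{equation*}
D_1 \int_\Omega |\nabla u_\infty|^2\,dx + \int_\Omega u_\infty(u_\infty - \bar u)^2\,dx = 0,
\end{equation*}
forcing $u_\infty \equiv 0$ or $u_\infty \equiv \bar u$. The zero limit is excluded by the identity $\bar u \int_\Omega u_n = \int_\Omega u_n^2 \le \|u_n\|_{L^\infty} \int_\Omega u_n$, obtained by integrating the $u_n$-equation: since each $u_n$ is nonconstant, $u_n \not\equiv 0$, so $\|u_n\|_{L^\infty} \ge \bar u$, which is incompatible with $u_n \to 0$ uniformly. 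Hence $u_\infty \equiv \bar u$ and the limiting $v$-equation gives $v_\infty \equiv h(\bar u)$.

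Finally, view the stationary problem as $F(u, v, \chi) = 0$ on an appropriate Neumann H\"older space. The Fr\'echet derivative in $(u, v)$ at $(\bar u, h(\bar u), 0)$ is $(U, V) \mapsto (D_1 \Delta U - \bar u U,\ D_2 \Delta V - V + h'(\bar u)U)$, an isomorphism since the spectra of both $D_1 \Delta - \bar u I$ and $D_2 \Delta - I$ on the Neumann space lie in $(-\infty, -\min(\bar u, 1)]$. Since $(\bar u, h(\bar u))$ solves (\ref{4}) for every $\chi$, the implicit function theorem identifies it as the unique solution in a $C^{2,\alpha}$-neighborhood for $|\chi|$ small; the compactness step then places $(u_n, v_n, \chi_n)$ in this neighborhood for $n$ large, forcing $(u_n, v_n) = (\bar u, h(\bar u))$ and contradicting nonconstancy. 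The main obstacle is the uniform $L^\infty$ bound in the first step, which in general dimension is exactly the content of [TW, Thm.~4.3] invoked in the statement; for the 1D setting of (\ref{14}) it is straightforward from the ODE structure.
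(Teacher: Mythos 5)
Your argument is correct in outline, but it takes a genuinely different route from the one the paper relies on: the paper gives no proof of its own and simply invokes Theorem 4.3 of \cite{TW}, whose argument is a direct, quantitative one --- test the stationary $u$-equation with $u-\bar u$, control $\nabla v$ through elliptic estimates for the $v$-equation, and absorb the chemotactic cross term when $\chi$ is small, forcing $\nabla u\equiv 0$ and producing an explicit smallness threshold $\chi_*$. Your route is the soft alternative: compactness along a sequence $\chi_n\downarrow 0$, identification of the limit as $(\bar u,h(\bar u))$ (the energy identity for the limiting equation and the exclusion of the zero limit via $\bar u\int u_n=\int u_n^2\le \|u_n\|_{L^\infty}\int u_n$ are both correct; note also that nonconstancy of the pair does force $u_n\not\equiv\mathrm{const}$, since $D_2\Delta-1$ with Neumann conditions is invertible), followed by the implicit function theorem at $(\bar u,h(\bar u),0)$, whose linearization is triangular and invertible exactly as you say. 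The Tello--Winkler style estimate buys an explicit $\chi_*$ and avoids compactness; your argument buys a proof that applies verbatim to the general $\Phi$, $h$ satisfying (\ref{5})--(\ref{8}) once the a priori bound is available, at the price of a non-quantitative threshold.

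Two caveats. First, the uniform $C^{2,\alpha}$ bound for $\chi$ in a bounded range is the real analytic content you still owe: in the one-dimensional setting of (\ref{14}) it follows along the lines of Lemmas 2.2--2.3, but for (\ref{4}) in dimension $N\ge 2$ the $L^p$ bootstrap you sketch must actually be carried out; moreover it is an auxiliary a priori estimate in \cite{TW}, not ``the content of Theorem 4.3'' itself, which is precisely the nonexistence statement being proved. Second, a small technical point: apply the implicit function theorem in a topology compatible with the convergence you extract, e.g. prove uniform $C^{2,\alpha}$ bounds and pass to $C^{2,\beta}$ convergence for $\beta<\alpha$ (or work in $H^2$ in one dimension), so that $(u_n,v_n,\chi_n)$ genuinely enters the uniqueness neighborhood furnished by the implicit function theorem.
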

Unlike random movements (diffusion), directed movements (chemotaxis) have the effect of destabilizing the spatially homogeneous solutions.  Then spatially inhomogeneous solutions may arise through bifurcation as the homogeneous one becomes unstable.  To study the regime of $\chi$ under which spatial patterns arise, we first implement the standard linear stability analysis of (\ref{15}) at $(\bar{u},\bar{v})$.  Let $(u,v)=(\bar u,\bar v)+(U,V)$, where $U$ and $V$ are small perturbations away from ($\bar u,\bar v$) in $H^2(\Omega)$.  Then we arrive at the following system
\begin{equation*}
\left\{
\begin{array}{ll}
U_t\approx \nabla\cdot(D_1  \nabla U - \chi \Phi(\bar u,\bar v) \nabla V)-\bar{u}U ,  & x \in\Omega,~t>0,\\
V_t \approx D_2 \Delta V - V+h'(\bar u)U, & x \in\Omega,~t>0,\\
\frac{\partial U}{\partial \textbf{n}}=\frac{\partial V}{\partial \textbf{n}}=0, &x\in\partial \Omega,~t>0.\\
\end{array}
\right.
\end{equation*}
According to the standard linearized stability analysis, the stability of $(\bar{u},\bar{v})$ can be determined by the eigenvalues of the following matrix
 \begin{equation}\label{111}
\begin{pmatrix}
 -D_1 \Lambda^2 -\bar{u} & \chi\Phi(\bar{u},\bar{v})\Lambda^2 \\
 h'(\bar{u})              & -D_2 \Lambda^2 -1
  \end{pmatrix} ,
   \end{equation}
where $\Lambda=\Lambda_k>0$, $k=1,2,...$, are the $k$-th eigenvalues of $-\Delta$ on $\Omega$ under the Neumann boundary conditions.

In particular we have that $\Lambda_k=\frac{k\pi}{L}$ for the one-dimensional domain $\Omega=(0,L)$ and the following result gives the linearized instability of $(\bar{u},\bar{v})$ with respect to (\ref{15}).
\begin{proposition}\label{proposition1}
The constant solution $(\bar{u},\bar{v})$ of (\ref{15}) is unstable if
\begin{equation}\label{112}
\chi>\chi_0=\min_{k\in \mathbb{N}^+}\chi_k=\min_{k\in \mathbb{N}^+}\frac{(D_1\big(\frac{k\pi}{L}\big)^2+\bar{u})(D_2\big(\frac{k\pi}{L}\big)^2 +1)}{\Phi(\bar{u},\bar{v})\big(\frac{k\pi}{L}\big)^2 h'(\bar{u})}.
\end{equation}
\end{proposition}

\begin{proof}For $\Omega=(0,L)$, the stability matrix (\ref{111}) becomes
\begin{equation}\label{113}
H_k=
\begin{pmatrix}
 -D_1 \big(\frac{k\pi}{L}\big)^2 -\bar{u} & \chi\Phi(\bar{u},\bar{v})\big(\frac{k\pi}{L}\big)^2 \\
 h'(\bar{u})              & -D_2\big(\frac{k\pi}{L}\big)^2 -1
  \end{pmatrix}.
   \end{equation}
Then $(\bar{u},\bar{v})$ is unstable if $H_k$ has an eigenvalue with positive real part for any $k=1,2,...$.  The characteristic polynomial of (\ref{113}) takes the form
\[p_k(\lambda)=\lambda^2+T_k\lambda+D_k,\]
where \[T_k=\Big(D_1+D_2\Big)\Big(\frac{k\pi}{L}\Big)^2+\bar u+1,\]
\[D_k=\Big(D_1\big(\frac{k\pi}{L}\big)^2+\bar{u}\Big)\Big(D_2\big(\frac{k\pi}{L}\big)^2 +1\Big)-\chi\Phi(\bar{u},\bar{v})\big(\frac{k\pi}{L}\big)^2 h'(\bar{u}),\]
then we see have that $p_k(\lambda)$ has one positive root if $p_k(0)=D_k<0$ from which the instability of $(\bar u,\bar v)$ implies (\ref{112}).  This finishes the proof of this proposition.
\end{proof}

The linear instability of spatially homogeneous solutions is insufficient to prove the existence of spatially inhomogeneous solutions.  However as we have seen above, the chemotaxis term has the effect of destabilizing spatially homogeneous steady states which become unstable if $\chi$ surpasses $\chi_0$, then a stable spatially inhomogeneous steady state of (\ref{15}) may emerge through bifurcations.  Clearly the emergence of spatially inhomogeneous solutions is due to the effect of large chemotaxis rate $\chi$, and we refer to this as cross-diffusion-induced patterns in the sense of Turing's instability.  One of the main contributions of this paper is the detailed bifurcation analysis for system (\ref{15}) at $(\bar{u},\bar{v})$.

The remaining parts of this paper are organized as follows.  In Section \ref{section2}, we formulate (\ref{11}) into a bifurcation problem by taking $\chi$ as the bifurcation parameter and establish infinitely many small amplitude nonconstant positive solutions of (\ref{11}) through local bifurcations-see Theorem \ref{theorem21}.  In particular, we shall see that the bifurcation value $\chi_k$ is the same as that in (\ref{112}).  Then we carry out global bifurcation analysis on the bifurcation branches and show that the solutions of (\ref{11}) on each branch must be strictly positive and all noncompact branches extends to infinity in the $\chi$ direction in Theorem \ref{theorem24}, therefore projection of the bifurcation diagram onto the $\chi$-axis takes the form $[\chi^*,\infty)$ for some $\chi^*\in(\chi_*,\chi_k]$, where $\chi_*$ is obtained in Proposition \ref{proposition1} and $\chi_k$ is the $k$--th bifurcation value.  In Section \ref{section3}, we show that the bifurcation branches are of pitchfork type.  In particular, for $\Phi(u,v)=u$ and $h(u)=\beta u$, $\beta>0$, the stabilities of the small amplitude steady states are determined and fully characterised in terms of system parameters.  See Theorem \ref{theorem31}.  In Section \ref{section4}, the asymptotic behaviors of monotone solutions are investigated as $\chi \rightarrow \chi_\infty \in (\chi_0,\infty]$ and/or the diffusion rate $D_1\rightarrow D_\infty\in[0,\infty]$.  See Theorem \ref{theorem41}.  Finally we include some discussions about our results and propose interesting problems in Section \ref{section5}.

\section{Existence of nonconstant positive solutions}\label{section2}
In this section, we study the existence of nonconstant positive solutions of system (\ref{11}).  There are two constant solutions to (\ref{11}): $(0,0)$ and $(\bar u,\bar v)$.  To this end, we shall apply the local bifurcation theory of Crandall and Rabinowitz \cite{CR} and  chemotaxis rate $\chi$ as the bifurcation parameter.  First of all, we define an operator over $\mathcal{X}\times\mathcal{X}\times\mathbb{R} \rightarrow \mathcal{Y} \times \mathcal{Y}$ by
\begin{equation}\label{21}
\mathcal{F}(u,v,\chi)=
 \begin{pmatrix}
(D_1  u' - \chi \Phi(u,v) v' )'+ (\bar{u}-u)u \\
  D_2  v'' -v+h(u)
 \end{pmatrix},
 \end{equation}
where $\mathcal{X}$ is the Hilbert space $H^2_N(0,L)=\{w\in H^2 (0,L)|w'(0)=w'(L)=0\}$ and $\mathcal{Y}=L^2\big(0,L\big)$.   Then we can convert system (\ref{11}) into the following abstract form
\[\mathcal{F}(u,v,\chi)=0, (u,v,\chi)\in \mathcal{X}\times \mathcal{X}\times \mathbb{R}^+.\]

Obviously the operator $\mathcal{F}$ is a continuously differentiable mapping from $\mathcal{X}\times \mathcal{X}\times \mathbb{R}$ to $\mathcal{Y} \times \mathcal{Y}$.  In order to apply the local bifurcation theory from Crandall and Rabinowitz \cite{CR}, we collect the following facts about the operator $\mathcal{F}(u,v,\chi)$:
\begin{enumerate}
  \item[Fact 1.]    $\mathcal{F}(\bar{u},\bar{v},\chi)=0$ for any $\chi \in \mathbb{R}^+$, where $( \bar{u},\bar{v})$ is the constant equilibrium;
  \item[Fact 2.]   for any fixed $(u_0,v_0) \in \mathcal{X} \times \mathcal{X}$, the Fr$\acute{\text{e}}$chet derivative of $\mathcal{F}$ is given by
  \begin{align}\label{22}
 &D_{(u,v)}\mathcal{F}(u_0,v_0,\chi)(u,v) \\
=&\begin{pmatrix}
  \Big(D_1 u' - \chi\big((\Phi_u(u_0,v_0)u+\Phi_v(u_0,v_0)v\big)v_0'+\Phi(u_0,v_0) v'\Big)' + (\bar u-2u_0)u\\
  D_2 v'' -v+ h'(u_0) u
  \end{pmatrix}\nonumber
  \end{align}
and in particular, for $(u_0,v_0)=(\bar u,\bar v)$, we have that
  \begin{equation}\label{23}
  D_{(u,v)}\mathcal{F}(\bar u,\bar v,\chi)(u,v)=
  \begin{pmatrix}
  D_1 u'' - \chi \Phi(\bar u,\bar v) v'' -\bar uu \\
  D_2 v''-v + h' (\bar u)u
  \end{pmatrix};
  \end{equation}

  \item[Fact 3.]  for any fixed $(u_0,v_0)\in \mathcal{X}\times \mathcal{X}$, $D_{(u,v)}\mathcal{F}(u_0,v_0,\chi)$ : $\mathcal{X} \times \mathcal{X}\to \mathcal{Y} \times \mathcal{Y}$ is a \emph{Fredholm} operator with zero index.
\end{enumerate}
Fact 1 and fact 2 can be verified by straightforward calculations.  To show fact 3, we rewrite (\ref{22}) as
\[D_{(u,v)} \mathcal{F}(u_0,v_0,\chi)(u,v)=I_1{u \choose v}''+ I_2{u \choose v}'+I_3{u \choose v}, \]
where \[I_1=
\begin{pmatrix}
 D_1  & -\chi\Phi(u_0,v_0) \\
  0 & D_2
  \end{pmatrix},~I_2=\begin{pmatrix}
  -\chi\Phi_u(u_0,v_0)v'_0~~~ & -\chi\big(\Phi_u(u_0,v_0)u'_0+2\Phi_v(u_0,v_0)v'_0\big) \\
  0 ~~~& 0
  \end{pmatrix} \]
and
\[I_3=\begin{pmatrix}
-\chi\big(\Phi_u(u_0,v_0) v'_0\big)'+\bar u-2u_0 ~~~& -\chi\big(\Phi_v(u_0,v_0) v'_0 \big)' \\
  h'(u_0)  ~~~& -1
  \end{pmatrix}.\]
Then we know that $D_{(u,v)}\mathcal{F}(u_0,v_0,\chi)$ is a linear and compact operator according to the standard elliptic regularity and Sobolev embedding theorems.  Moreover, we see that matrix $I_1$ defines the principal part of the elliptic operator $D_{(u,v)} \mathcal{F}(u_0,v_0,\chi)$ and it has two positive eigenvalues, then according to Theorem 4.4 or case 3 of Remark 2.5 in Shi and Wang \cite{SW}, this operator satisfies the \emph{Agmon's condition}.  Moreover it is a \emph{Fredholm} operator with zero index according to Corollary 2.11 or Remark 3.4 of Theorem 3.3 in \cite{SW}, from which Fact 3 follows.  This proves all the facts needed for the local bifurcation analysis.

Now we identify potential candidates of the bifurcation values $\chi$.  In order to let the bifurcations occur at the equilibrium $(\bar u,\bar v,\chi)$, we need the implicit function theorem to fail there, therefore the mapping $D_{(u,v)}\mathcal{F}(\bar u,\bar v,\chi)$ in (\ref{23}) must have a nontrivial kernel, i.e,
\[\mathcal{N}(D_{(u,v)}\mathcal{F}(\bar u,\bar v,\chi))\ne \{\textbf{0}\},\]
where $\mathcal{N}$ denotes the null set.  We argue by contradiction.  If not, we choose $(u,v)\in D_{(u,v)}\mathcal{F}(\bar u,\bar v,\chi)$ and write their eigen-expansions as
\begin{equation}\label{24}
u(x)=\sum_{k=0}^{\infty}  \bar u_k(x), v(x)=\sum_{k=0}^{\infty} \bar  v_k(x),
\end{equation}
where \[\bar u_k=T_k \cos \frac{k\pi x}{L},   \bar v_k=S_k \cos\frac{k\pi x}{L}\] and $T_k$ and $S_k$ are constants.  Substituting (\ref{24}) into (\ref{23}), we have that
 \begin{equation}\label{25}
\begin{pmatrix}
 -D_1 \big(\frac{k\pi}{L}\big)^2 -\bar u  ~~~& \chi\Phi(\bar u,\bar v)\big(\frac{k\pi}{L}\big)^2 \\
 h'(\bar u)             ~~~  & -D_2\big(\frac{k\pi}{L}\big)^2 -1
  \end{pmatrix} \begin{pmatrix}
  T_k \\
  S_k
   \end{pmatrix}=0,
   \end{equation}
then our assumption above requires that system (\ref{25}) admits at least one nonzero solution, therefore its coefficient matrix must be singular and we arrive at the following identity
\begin{equation}\label{26}
\begin{vmatrix}
 -D_1 \big(\frac{k\pi}{L}\big)^2 -\bar u  & \chi\Phi(\bar u,\bar v)\big(\frac{k\pi}{L}\big)^2 \\
h'(\bar u)              & -D_2\big(\frac{k\pi}{L}\big)^2 -1
  \end{vmatrix}=0.
  \end{equation}
From straightforward calculations, we obtain the following potential bifurcation value of $\chi$, which will be denoted by $\chi_k$ from now on
 \[ \chi_k=\frac{(D_1\big(\frac{k\pi}{L}\big)^2+\bar u)(D_2\big(\frac{k\pi}{L}\big)^2 +1)}{\Phi(\bar u,\bar v)\big(\frac{k\pi}{L}\big)^2 h'(\bar u)}>0,  k\in \mathbb{N}^+.\]
Note that $\chi_k$ here is the same as that given in (\ref{112}).  $k=0$ can be easily excluded in (\ref{26}) and for each $k\in\mathbb N^+$ we see that $\text{dim}\Big(\mathcal{N}(D_{(u,v)}\mathcal{F}(\bar u,\bar v,\chi_k)) \Big)=1$ and in particular
\begin{equation}\label{27}
\mathcal{N}(D_{(u,v)}\mathcal{F}(\bar u,\bar v,\chi_k))=\text{span}\Big\{(\bar u_k(x),\bar v_k(x))\Big\},
\end{equation}
where
\begin{equation}\label{28}
(\bar u_k(x),\bar v_k(x))=\Big(Q_k\cos \frac{k\pi x}{L}, \cos \frac{k\pi x}{L}\Big),~ Q_k=\frac{D_2\big(\frac{k\pi}{L}\big)^2+1}{h'(\bar u)},~k\in  \mathbb{N}^+.
\end{equation}
Before proceeding our analysis, we remark that the local bifurcation does not occur at $(0,0)$.  Actually, if $(\bar{u},\bar{v})=(0,0)$, the coefficient matrix in (\ref{25}) is nonsingular and we must have that $T_k=S_k=0$ for all $k$.  Therefore $\mathcal{N}(D_{(u,v)}\mathcal{F}(0,0,\chi))=\{\textbf{0}\}$ for all $\chi \in \mathbb{R}$ and this contradicts the necessary condition for bifurcation to occur at $(0,0,\chi)$.  Now we are ready to prove the following theorem, which is the first bifurcation result of our paper.
\begin{theorem}\label{theorem21}
Suppose that conditions (\ref{16}) and (\ref{18}) are satisfied.  We assume that
\begin{equation}\label{29}
\bar u\neq j^2 k^2 D_1D_2\left(\frac{\pi}{L}\right)^4\text{ for all positive integers }j\neq k.
\end{equation}
Then for each $k\in \mathbb N^+$, a branch of spatially inhomogeneous solutions of (\ref{11}) bifurcate from $(\bar u,\bar v)$ at $\chi=\chi_k$.  Moreover, there exist a small positive constant $\delta$ and continuous functions $s\in(-\delta,\delta):\to \chi_k(s)\in \mathbb R^+$
and $s\in(-\delta,\delta):\to(u_k(s,x),v_k(s,x))\in\mathcal{X}\times \mathcal{X}$ such that $\chi_k(0)=\chi_k$ and the bifurcation branches around $(\bar u,\bar v,\chi_k)$ can be parameterized as
\[\chi_k(s)=\chi_k+O(s),(u_k(s,x),v_k(s,x))=(\bar u,\bar v)+s(Q_k,1)\cos \frac{k\pi x}{L}+s(\xi_k(s),\eta_k(s) ),\]
where $(\xi_k(s),\eta_k(s))$ is an element in a closed complement of $\mathcal{N}(D_{(u,v)}\mathcal{F}(\bar u,\bar v,\chi_k))$ in $\mathcal{X} \times\mathcal{X}$ with $(\xi_k(0),\eta_k(0))=(0,0)$; furthermore $(u_k(s,x),v_k(s,x),\chi_k(s))$ solves system (\ref{11}) and all nonconstant solutions of (\ref{11}) around $(\bar u,\bar v,\chi_k)$ must stay on the curve
\[\Gamma_k(s):=s\in(-\delta,\delta) \rightarrow (u_k(s,x),v_k(s,x),\chi_k(s))\in \mathcal{X}\times \mathcal{X}\times \mathbb{R}.\]
\end{theorem}
\begin{proof}
To make use of the local bifurcation theory in \cite{CR}, we have verified all but the so-called \emph{transversality condition}:
\[\frac{d}{d\chi}D_{(u,v)}\mathcal{F}(\bar u,\bar v,\chi)(\bar u_k,\bar v_k)|_{\chi=\chi_k}\notin \mathcal{R}(D_{(u,v)}\mathcal{F}(\bar u,\bar v,\chi_k)).\]
We argue by contradiction and suppose there exists $(\widetilde u,\widetilde v)$ such that the transversality condition above fails.  Obviously
\begin{equation}\label{210}
 \frac{d}{d\chi}D_{(u,v)}\mathcal{F}(\bar u,\bar v,\chi)(\bar u_k,\bar v_k)|_{\chi=\chi_k}=
{-\Phi(\bar u,\bar v)   \bar v_k'' \choose 0},
\end{equation}
where $\bar u_k=Q_k \cos \frac{k\pi x}{L}$ and $\bar v_k=\cos \frac{k\pi x}{L}$ as defined in (\ref{28}),
then we obtain that
\begin{equation}\label{211}
\left\{
\begin{array}{ll}
D_1 \widetilde u'' - \chi_k \Phi(\bar u,\bar v) \widetilde v''- \bar{u} \widetilde u=-\Phi(\bar u,\bar v) \bar v_k'',  & x \in (0,L),\\
D_2 \widetilde v'' -\widetilde v + h' (\bar u)\widetilde u ,=0, & x \in (0,L),\\
\widetilde u(x)=\widetilde v(x)=0,& x=0,L.\\
\end{array}
\right.
\end{equation}
Similar as the analysis above, we expand $\tilde u$ and $\tilde v$ as
\[\tilde{u}=\sum_{k=0}^\infty \tilde T_k \cos \frac{k\pi x}{L}, \tilde{v}=\sum_{k=0}^\infty \tilde S_k \cos \frac{k\pi x}{L},\]
and substitute the series into (\ref{211}) to obtain the following system
\begin{equation}\label{212}
\begin{pmatrix}
 -D_1 (\frac{k\pi}{L})^2 -\bar u   ~~~& \chi_k\Phi(\bar u,\bar v)\big(\frac{k\pi}{L}\big)^2 \\
  h'(\bar u)               ~~~ & -D_2\big(\frac{k\pi}{L}\big)^2 -1
  \end{pmatrix} \begin{pmatrix}
  \widetilde T_k \\
  \widetilde S_k
   \end{pmatrix}=\begin{pmatrix}
 \Phi(\bar u,\bar v)\big(\frac{k\pi}{L}\big)^2  \\
  0
  \end{pmatrix}.
   \end{equation}
Now we see from (\ref{26}) that the coefficient matrix is singular and  the right hand side of (\ref{212}) is not in the range of the matrix, therefore system (\ref{211}) is unsolvable.  We reach a contradiction and the \emph{transversality condition} is verified.  Finally, we need that $\chi_k\neq\chi_j$ for all $j\neq k$, i.e,
\[\frac{(D_1\big(\frac{k\pi}{L}\big)^2+\bar u)(D_2\big(\frac{k\pi}{L}\big)^2 +1)}{\Phi(\bar u,\bar v)\big(\frac{k\pi}{L}\big)^2 h'(\bar u)}\neq\frac{(D_1\big(\frac{j\pi}{L}\big)^2+\bar u)(D_2\big(\frac{j\pi}{L}\big)^2 +1)}{\Phi(\bar u,\bar v)\big(\frac{j\pi}{L}\big)^2 h'(\bar u)},\]
which is equivalent as (\ref{29}) as we can show through straightforward calculations.
\end{proof}
From the local bifurcation analysis, we are able to obtain nonconstant positive solutions with small amplitudes around $(\bar u,\bar v,\chi_k)$ for all $k\in\mathbb{N}^+$.  Moreover, we know from Proposition \ref{proposition1} that, the equilibrium $(\bar u,\bar v)$ is stable for all $\chi<\min_{k\in\mathbb{N}^+}\chi_k$ and it becomes unstable if $\chi>\min_{k\in\mathbb{N}^+}\chi_k$, which is exactly the location where the bifurcation occurs.  Then we see that the instability of homogeneous steady state and pattern formations are driven by the cross-diffusion (the chemotaxis term) in the sense of Turing's instability.

According to the local bifurcation analysis above, we have established nonconstant positive solutions of (\ref{11}), which are small perturbations of ($\bar{u},\bar{v}$).  We now proceed to extend the local curves $\Gamma_k(s)$ by the global bifurcation theory for nonlinear \emph{Fredholm} mappings from \cite{CR,PR} and the recent version developed in \cite{SW}.  The first step of our analysis is to present the following \emph{a priori} estimates on the solutions of (\ref{11}).
\begin{lemma}\label{lemma22}
Assume that condition (\ref{19}) is satisfied.  Let $(u,v)$ be any positive solution to the boundary value problem (\ref{11}).  Then
\begin{equation}\label{213}
\bar u \Vert u \Vert_{L^1(0,L)}=\Vert u \Vert^2_{L^2(0,L)} \leq \bar u^2 L
\end{equation}
and \[\min_{[0,L]} u\leq \bar u\leq \max_{[0,L]} u; \]  moreover, there exists a positive constant C independent of $D_1$ and $\chi$ such that
\begin{equation}\label{214}
\Vert v \Vert_{H^2(0,L)}\leq C.
\end{equation}
\end{lemma}

\begin{proof}
Integrating the first equation of (\ref{11}) over $(0,L)$ by parts, we have from the H\"older's inequality that
\[\Vert u \Vert^2_{L^2(0,L)}=\bar u \Vert u \Vert_{L^1(0,L)} \leq \bar u \Vert u \Vert_{L^2(0,L)} \sqrt{L}, \]
and it follows that $\Vert u \Vert_{L^2(0,L)} \leq \bar u\sqrt{L}$.  On the other hand, we can also see that
\[ \int_0^L (\bar u-u)udx =0,\]
then we must have that either $(\bar u-u)u\equiv 0$ or $(\bar u-u)u$ changes sign over $(0,L)$, hence $\min_{[0,L]} u\leq \bar u \leq \max_{[0,L]} u $ in either case.  On the other hand, by applying the elliptic regularity theory to the second equation of (\ref{11}) and using (\ref{19}), we can show that $\Vert v \Vert_{H^2(0,L)}$ is uniformly bounded by a positive constant $C$.  This finishes the proof of the lemma.
\end{proof}

\begin{lemma}\label{lemma23}
Assume that conditions (\ref{16})-(\ref{19}) are satisfied.  Let $(u,v)$ be any positive solution of (\ref{11}).  Then we have that
\begin{equation}\label{215}
u(x)\leq u(L) e^{\frac{\chi}{D_{1}}\Vert \frac{\Phi(u,v)}{u}  v'\Vert_\infty (L-x)}+\frac{\bar u^2L}{D_1} \int_x^L e^{-\frac{\chi}{D_{1}}\Vert \frac{\Phi(u,v)}{u}  v'\Vert_\infty(x-y)}dy.
\end{equation}

\end{lemma}
\begin{proof}
We integrate the $u$-equation in (\ref{11}) over $(x,L)$ and obtain that
\begin{equation}\label{216}
u'(x)-\frac{\chi}{D_{1}}\Phi(u,v)v'=\frac{1}{D_{1}}\int_x^L (\bar u-u)u dy \geq -\frac{1}{D_{1}}\int_x^L u^2dy\geq -\frac{\bar u^2L}{D_{1}},
\end{equation}
where the last inequality follows from (\ref{213}).  Note that $\Vert \frac{\Phi(u,v)}{u}  v'\Vert_{\infty}$ is bounded because of (\ref{17}) and (\ref{214}).  Then we have from (\ref{216}) that
\begin{equation}\label{217}
u'(x)+\frac{\chi}{D_{1}}\Vert \frac{\Phi(u,v)}{u}  v'\Vert_{L^\infty}u\geq -\frac{\bar u^2L}{D_{1}},
\end{equation}
and the Gr\"onwall's inequality implies (\ref{215}).
\end{proof}

We see from (\ref{215}) and the standard elliptic regularity that $\Vert u\Vert_{H^2}$ is bounded if both $u(L)$ and $\chi/D_1$ are finite.  Moreover, according to the uniform boundedness of $\Vert v\Vert_{H^2}$ and the Sobolev embedding, we have that $\forall \gamma\in(0,\frac{1}{2})$, $\Vert v\Vert_{C^{1+\gamma}}$ is uniformly bounded for all $\chi\in(0,\infty)$.

Now we carry the global bifurcation analysis on the local bifurcation curve $\Gamma_k(s)$ established in Theorem \ref{theorem21}.  In particular, we shall show that all solutions on the continuum of each branch must be strictly positive on $[0,L]$.  In the rest of this section, we shall drop the subindex $k$ and denote $\Gamma(s)$ as $\Gamma_k(s)$ without confusing our reader.

For each $k\in\mathbb N^+$, let $\Gamma_u=\{(u_k(s,x),v_1(s,x),\chi_k(s)\vert s\in(0,\delta)\}$ be the upper branch and $\Gamma_l=\{(u_1(s,x),v_1(s,x),\chi_k(s)\vert s\in(-\delta,0)\}$ be the lower branch of the bifurcation curve $\Gamma(s)$ near the bifurcation point ($\bar{u},\bar{v},\chi_k$) respectively.  On the other hand, we can easily show from the strong maximum principle and Hopf's lemma that $u(x)\geq0$ and $v(x)\geq0$ for $x\in [0,L]$ for all solutions of system (\ref{11}).  Thus we let $V=(\mathcal{X} \times \mathcal{X} \times \mathbb{R}^+)\cap \{(u,v,\chi) \vert u(x) \geq 0, v(x)\geq 0,~x\in[0,L]\}$ and consider the problem (\ref{11}) in this cone.  Denoting the solution set of (\ref{11}) by $S=\big\{(u,v,\chi)\in V:F(u,v,\chi)=0, (u,v)\neq(\bar{u},\bar{v}) \big\}$ and $\bar{S}$ the closure of $S$, we readily see that $\bar{S}$ is not empty since $\Gamma(s)$ is contained in $\bar{S}$.

Let $\mathcal{C}$ be a connected component (maximal connected subset) of $\bar{S}$ and $\mathcal{C}^+$ be the connected component of $\mathcal{C}\backslash\{\Gamma_l\cup(\bar u,\bar v,\chi_k) \}$ that contains $\Gamma_u$ (resp. $\mathcal{C}^-$ be the connected component of $\mathcal{C}\backslash\Gamma_u \cup(\bar u,\bar v,\chi_k) \}$ that contains $\Gamma_l$), then we have from Theorem 4.4 in \cite{SW} that each of the sets $\mathcal{C}^+$ and $\mathcal{C}^-$ satisfies one of the following alternatives: (i) it is not compact in $V$; (ii) it contains a point $(\bar{u},\bar{v},\chi^*)$ with $\chi^*\neq \chi_k$; or (iii) it contains a point $(\bar{u}+u,\bar{v}+u,\chi)$, where $(u,v)\neq (0,0)$ and $(u,v)\in \mathcal{Z}$, where $\mathcal{Z}$ is a closed complement of $\mathcal{N}\Big(D_{(u,v)} \mathcal{F}(\bar{u},\bar{v},\chi_k) \Big)$ in $\mathcal{X} \times \mathcal{X}$.  Without loss of our generality, we take from now on
\begin{equation}\label{218}
\mathcal{Z}=\Big\{(u,v)\in \mathcal{X}\times \mathcal{X}|\int_0^L\big(Q_ku(x)+v(x)\big)\cos\frac{k\pi x}{L}=0\Big\},
\end{equation}
where $Q_k=\frac{D_2(\frac{k\pi}{L})^2 +1}{h'(\bar{u})}$.  We will show that if $\mathcal{C}^+$ (also $\mathcal{C}^-$) is noncompact then it extends to infinity in the positive direction of $\chi$-axis.  Therefore, the projection of the continuum of the solution set $\mathcal{C}^+$ (and also $\mathcal{C}^-$) takes the form $[\tilde \chi,\infty)$ for some $\tilde \chi\in (\chi_*,\chi_k]$, where $\chi_*$ is obtained in Theorem \ref{theorem11}.  Moreover, all solutions on $\mathcal{C}^+$ (and also $\mathcal{C}^-$) must be strictly positive on $[0,L]$.  Without loss of our generality, we study only the upper branch $\mathcal{C}^+$ and we are now in a position to present another main result of this paper.
\begin{theorem}\label{theorem24}
Suppose that the conditions in Theorem \ref{theorem21} are satisfied.  Then all solutions in each bifurcation branch $\Gamma_k(s)$ are strictly positive on $[0,L]$.  Moreover the non-compact continuum of each branch can only extend to infinity in the positive $\chi$-axis direction.
\end{theorem}
\begin{proof}  As a matter of fact, we will show that the elements on $\mathcal{C}^+$ satisfy the properties described in the theorem while the same conclusions can be made about $\mathcal C^-$.  We first prove that $(u,v,\chi)$ stays strictly positive on $\mathcal{C}^+$  for $x\in [0,L]$.

To this end, we introduce the set of positive functions $\mathcal{P}=\{(u,v)\in\mathcal{X} \times \mathcal{X}\vert u(x)>0,v(x)>0, x\in[0,L] \}$ and we want to show that $\mathcal{C}^+\subset \mathcal{P} \times \mathbb{R}^+$ (notice that their intersection is not empty because at least it contains the portion of $\Gamma_u(s)$ near the bifurcation point $(\bar u,\bar v,\chi_k)$ (i.e, with $s\in(0,\delta)$)).  If this fails, since $\mathcal{C}^+$ is connected and $\mathcal{P} \times \mathbb{R}^+$ is open, there exists a solution $(u,v,\chi)\in \mathcal{C}^+ \times \partial(\mathcal{P} \times \mathbb{R}^+)$  to (\ref{11}) such that $u, v\geq 0$ on $[0,L]$ and either $u(x)=0$ or $v(x)=0$ somewhere over $[0,L]$, or $\chi=0$.  If $\chi=0$, system (\ref{11}) becomes
\begin{equation}\label{219}
\left\{
\begin{array}{ll}
D_1u''+(\bar u-u)u=0,  & x \in(0,L),\\
D_2 v'' -v + h(u)=0, & x \in(0,L),\\
u'(x)=v'(x)=0,&x=0,L.
\end{array}
\right.
\end{equation}
It is known from our discussions in the introduction section that system (\ref{219}) admits only constant solution $(0,0)$ or $(\bar u,\bar v)$.  Moreover, we know that bifurcation does not occur at $(0,0)$.  Therefore $(u,v)\equiv (\bar u,\bar v)$ and $\chi=0$ must be a bifurcation value.  However this is impossible since all bifurcation values take the form $\chi_k$ in (\ref{112}) which must be positive as we have shown in Theorem \ref{theorem21}.  If $v(x_0)=0$ for some $x_0\in[0,L]$, we apply the strong maximum principle and Hopf's lemma to the following problem
\begin{equation}\label{220}
\left\{
\begin{array}{ll}
D_2 v'' -v =- h(u)\leq0, & x \in(0,L),\\
v'(x)=0,&x=0,L,
\end{array}
\right.
\end{equation}
then we have that $v(x)\equiv 0$ on $[0,L]$ and it follows from the $u$-equation in (\ref{11}) that $u(x)\equiv0$.  This again reaches a contradiction since bifurcation does not occur at $(0,0)$.  Therefore we must have that $v(x)>0$ on $[0,L]$.  Similarly we apply the strong maximum principle and Hopf's lemma to the $u$-equation in (\ref{11}), which is equivalent to
\begin{equation}\label{221}
\left\{
\begin{array}{ll}
D_1u''- \chi \Phi_u(u,v) v'u'-\Big(\chi \frac{\Phi(u,v)}{u}v''+u-\bar u\Big)u=\chi\Phi_v(u,v)(v')^2\leq0,&x\in(0,L),\\
u'(x)=0,&x=0,L,
\end{array}
\right.
\end{equation}
where coefficients of $u'$ and $u$ are bounded because $\frac{\Phi(u,v)}{u}$ is bounded thanks to (\ref{17}).  Then we must have $u(x)>0$ on $[0,L]$ and this completes the proof of the positivity part.

Suppose that $\mathcal{C}^+$ is unbounded in $\mathcal{X} \times \mathcal{X} \times \mathbb{R}$.  We shall show that is can only extend to infinity in the positive direction $\chi$-axis.  According to Lemma \ref{lemma22} and Lemma \ref{lemma23},  both $u$ and $v$ are bounded in $\mathcal{X}=H^2(0,L)$ given a finite $\chi>0$, hence $\mathcal{C}^+$ must extend to infinity along the $\chi$-axis.  On the other hand, suppose that $\mathcal{C}^+$ extends to the negative direction of $\chi$-coordinate, then it must cross $\chi=0$, for which (\ref{11}) no non-constant positive solution which implies that 0 is a bifurcation value, however this is impossible as we have shown above that any bifurcation value must be $\chi_k$ given by (\ref{112}).  Therefore $\mathcal{C}^+$ must extend to infinity in the positive direction of $\chi$-coordinate and its project onto the $\chi$-axis takes the form $[\chi_0,\infty)$ for some $\chi_0\leq \chi_k$.  This completes the proof of Theorem \ref{theorem24}.
\end{proof}

According to the discussions above, $\mathcal{C}^+$ satisfies one of the following alternatives: (i) it is not compact in $V$; (ii) it contains a point $(\bar{u},\bar{v},\chi^*)$ with $\chi^*\neq \chi_k$; or (iii) it contains a point $(\bar{u}+u,\bar{v}+u,\chi)$, where $(u,v)\neq (0,0)$ and $(u,v)\in \mathcal{Z}$, which is defined in (\ref{218}).  When there is no cellular growth, \cite{WX} ruled out the last two cases for the first branch by showing that all bifurcating solutions on the continuum stay monotone, i.e., $u'(x)<0$ and $v'(x)<0$ on $(0,L)$.  To apply their topology argument we proceed as follows.  Define the set $\mathcal{P}^+=\{(u,v)\in \mathcal{X} \times \mathcal{X} \vert u'(x)<0, v'(x)<0,x\in (0,L)\}$ and we need to show that $\mathcal{C}^+\subset \mathcal{P}^+ \times \mathbb{R}^+ $.  Apparently $\mathcal{C}^+ \cap (\mathcal{P}^+ \times \mathbb{R}^+) \neq \emptyset$.  Since $\mathcal{C}^+$ is a connected subset of $\mathcal{X} \times \mathcal{X} \times \mathbb{R}$, it is sufficient to show that $\mathcal{C}^+ \cap (\mathcal{P}^+ \times \mathbb{R}^+)$ is both open and closed with respect to the topology of $\mathcal{C}^+$.  To show the openness, we take some $(\tilde{u},\tilde{v},\tilde{\chi})\in \mathcal{C}^+ \cap (\mathcal{P}^+ \times \mathbb{R}^+)$ and assume that there exists a sequence $\{(\tilde{u}_n,\tilde{v}_n,\tilde{\chi}_n)\}$ in $\mathcal{C}^+$ that converges to $(\tilde{u},\tilde{v},\tilde{\chi})$ in the norm of $\mathcal{X} \times \mathcal{X} \times \mathbb{R}$.  Then we have from the standard elliptic regularity theories that $(\tilde{u}_n,\tilde{v}_n) \rightarrow (\tilde{u},\tilde{v})$ in $C^2([0,L]) \times C^2([0,L])$.  Differentiating the $v$-equation in (\ref{11}), we have that
\begin{equation}\label{222}
\left\{
\begin{array}{ll}
D_2 (\tilde{v}')'' - \tilde{v}' =- h'(\tilde{u})\tilde{u}'\geq0, & x \in(0,L),\\
\tilde{v}'(0)=\tilde{v}'(L)=0.
\end{array}
\right.
\end{equation}
Then we conclude from Hopf's lemma that
\begin{equation}\label{223}
\tilde{v}''(L)>0>\tilde{v}''(0).
\end{equation}
This second order non-degeneracy at the boundary, together with the fact $\tilde{v}'<0$, implies that $\tilde{v}'_n<0$ on $(0,L)$ for large $n$.  Similarly we can show that $\tilde{u}''(0)<0<\tilde{u}''(L)$ and again the second order non-degeneracy implies that $\tilde{u}'_n<0$ on $(0,L)$ for large $n$.  This completes the proof of the openness.  To show the closedness of $\mathcal{C}^+ \cap (\mathcal{P}^+ \times \mathbb{R}^+)$ in $\mathcal{C}^+$, we take a sequence $\{(\tilde{u}_n,\tilde{v}_n,\tilde{\chi}_n)\} \in \mathcal{C}^+ \cap (\mathcal{P}^+ \times \mathbb{R}^+)$ and assume that there exists $(\tilde{u},\tilde{v},\tilde{\chi})$ such that $\{(\tilde{u}_n,\tilde{v}_n,\tilde{\chi}_n)\}  \rightarrow (\tilde{u},\tilde{v},\tilde{\chi})$ in the topology of $\mathcal{C}^+$.  Again, by elliptic regularity theory we have that, $\{(\tilde{u}_n,\tilde{v}_n)\}  \rightarrow (\tilde{u},\tilde{v})$ in $C^2([0,L]) \times C^2([0,L])$ and $\tilde{u}'(x)\geq0$, $\tilde{v}'(x)\geq0$ on $(0,L)$.  It is sufficient to show that $\tilde{u}'(x)>0$ and $\tilde{v}'(x)>0$ on $(0,L)$ and we first prove the latter one by a contradiction argument.  If $\tilde{v}'(x)=0$ for $x_0\in(0,L)$, we apply the strong maximum principle and Hopf's lemma to (\ref{222}) and have that $\tilde{v}'(x)\equiv0$.  Then we see that the $u$-equation becomes $D_1 \tilde{u}''(x)+ (\bar u-\tilde{u})\tilde{u}=0$, $\tilde{u}'(0)=\tilde{u}'(L)=0$, which implies that $\tilde{u}'(x)=0$.  Thus $(\tilde{u},\tilde{v})\equiv(0,0)$ or $(\tilde{u},\tilde{v})\equiv(\bar u,\bar v)$.  The first case is impossible, since we have shown that bifurcation can not occur at $(0,0)$.  Then $(\tilde{u},\tilde{v})\equiv(\bar u,\bar v)$ and $\tilde{\chi}$ is a bifurcation value thus equals $\chi_k$ for some $k\geq1$.  We know that $k=1$ is impossible since $(\bar u,\bar v,\chi_1)\not\in\mathcal{C}^+$.  Moreover, $k\geq2$ is also impossible since $(\tilde{u}_n,\tilde{v}_n,\tilde{\chi}_n)$ around the bifurcation point $(\bar u,\bar v,\chi_k), k\geq2$, satisfy the formula in Theorem \ref{theorem21} and must be non-monotone around ($\bar u,\bar v,\chi_k$), which is a contradiction to our assumption that $\tilde{u}'\leq0$ and $\tilde{v}'\leq0$ on $(0,L)$.  However, one can not show that $\tilde{u}'<0$ of ($0,L$).  Therefore one needs a novel approach than \cite{WX} to this end.  Moreover it is interesting and also important to study the nodal profiles of the non-monotone bifurcating solutions.  However these questions are out of the scope of this paper.

\section{Stability analysis of the bifurcating solutions}\label{section3}
In this section, we investigate the stability or instability of the spatially inhomogeneous solution $(u_k(s,x),v_k(s,x))$ that bifurcates from $(\bar{u},\bar{v})$ at $\chi=\chi_k$.  For this purpose, we apply the classical the linearized stability results of Crandall and Rabinowitz in \cite{CR2} through the analysis of the spectrum of system (\ref{11}).  Stability here refers to that of the inhomogeneous patterns taken as an equilibrium to (\ref{15}).  First of all, we determine the direction in which the bifurcation curve $\Gamma(s)$ turns around $(\bar{u},\bar{v},\chi_k)$.

\subsection{Bifurcation of pitchfork type}We recall from Theorem 1.7 in \cite{CR} that for any $s\in (-\delta,\delta)$,
$\Big(u_k(s,x)-\bar u-sQ_k \cos\frac{k\pi x}{L},v_k(s,x)-\bar v-s\cos\frac{k\pi x}{L}\Big) \in\mathcal{Z}$, where $\mathcal{Z}$ is defined as in (\ref{218}).  Furthermore, if $\Phi(u,v)$ is $C^5$-smooth, then $\mathcal{F}$ defined in (\ref{21}) is $C^4$-smooth.  According to Theorem 1.18 of \cite{CR}, $(u_k,v_k,\chi_k)$ are $C^3$-smooth functions of $s$ and we can write the following expansions:
\begin{equation}\label{31}
  \left \{
\begin{array}{ll}
u_k(s,x)=\bar u + sQ_k\cos\frac{k\pi x}{L}+s^2\psi_1 +s^3\psi_2  +o(s^3),\\
v_k(s,x)=\bar v + s\cos\frac{k\pi x}{L}+s^2\varphi_1 +s^3\varphi_2  +o(s^3),\\
\chi_k(s)= \chi_k+ \mathcal{K}_2s+\mathcal{K}_3s^2+o(s^2),\\
\end{array}
\right.
\end{equation}
where $(\psi_i,\varphi_i)\in \mathcal{Z}$ for $i=1,2$ and $\mathcal{K}_2$, $\mathcal{K}_3$ are constants.  Note that the $o(s^3)$ terms in $u_k(s,x)$ and $v_k(s,x)$ are measured in the $H^2$-norms.

As we shall see the coming analysis that, if $\mathcal{K}_2\neq0$, the sign of $\mathcal{K}_2$ determines the stability of $(u_k(s,x),v_k(s,x))$, and if $\mathcal{K}_2=0$, we need to determine the sign of $\mathcal{K}_3$, and so on so forth.  Now we write each component of the $u$-equation into a series of $s$ and then obtain the following identities from straightforward calculations,
\begin{eqnarray}\label{32}
D_1u''&=&-D_1\left(\frac{k\pi}{L}\right)^2s Q_k\cos\frac{k\pi x}{L}+s^2D_1\psi''_1+s^3D_1\psi_2''+o(s^3),\\
v'&=&-\left(\frac{k\pi}{L}\right)s\sin\frac{k\pi x}{L}+s^2\varphi'_1+s^3\varphi'_2+o(s^3),\label{33} \\
(\bar u-u)u&=&-\bar u\big(Q_ks\cos\frac{k\pi x}{L}+\psi_1s^2+\psi_2s^3+o(s^3)\big)\\ \nonumber
&&-\big(Q_ks\cos\frac{k\pi x}{L}+\psi_1s^2+\psi_2s^3+o(s^3)\big)^2, \label{34} \\ \nonumber
\end{eqnarray}
and
\begin{eqnarray}\label{35}
\Phi(u,v)&=&\Phi(\bar u,\bar v)+\Phi_u(\bar u,\bar v)u+\Phi_v(\bar u,\bar v)v \\ \nonumber
&&+\frac{1}{2}\big(\Phi_{uu}(\bar u,\bar v)u^2+\Phi_{vv}(\bar u,\bar v)v^2+2\Phi_{uv}(\bar u,\bar v)uv\big)+o((u-\bar u)^2,(v-\bar v)^2)\\ \nonumber
&=&\Phi(\bar u,\bar v)+s\Big(\Phi_u(\bar u,\bar v)Q_k\cos\frac{k\pi x}{L}+\Phi_v(\bar u,\bar v)\cos\frac{k\pi x}{L}\Big)+s^2\Big(Q_k\Phi_u(\bar u,\bar v)\psi_1\\
&&+\Phi_v(\bar u,\bar v)\varphi_1+\big(\frac{1}{2}\Phi_{uu}(\bar u,\bar v)Q_k^2+\frac{1}{2}\Phi_{vv}(\bar u,\bar v)+\Phi_{uv}(\bar u,\bar v)Q_k\big)\cos^2\frac{k\pi x}{L}\Big)+o(s^2), \nonumber
\end{eqnarray}
where again the little-$o$ terms are taken with respect to the $H^2$-norms.  After substituting the terms (\ref{32})-($\ref{35}$) into the $u$-equation of (\ref{11}), we obtain that
\begin{eqnarray} \nonumber
&&sD_1\left(\frac{k\pi}{L}\right)^2Q_k\cos\frac{k\pi x}{L}-s^2D_1\psi_1''-s^3D_1\psi_2'' \\ \nonumber
&=&-\bar u\big(sQ_k\cos\frac{k\pi x}{L}+s^2\psi_1+s^3\psi_2+o(s^3)\big)-\Big(sQ_k\cos\frac{k\pi x}{L}+s^2\psi_1+s^3\psi_2+o(s^3)\Big)^2 \label{36} \\
&&-\Big(\chi_k+\mathcal{K}_2s+\mathcal{K}_3s^2+o(s^3)\Big) \Big((P_0+P_1s+P_2s^2)\big( \chi_k+\mathcal{K}_2s+\mathcal{K}_3s^2+o(s^3)\big)'\Big)', \\ \nonumber
\end{eqnarray}
where we have used the notations that $P_0=\Phi(\bar u,\bar v)$, $P_1=\Phi_u(\bar u,\bar v)Q_k\cos\frac{k\pi x}{L}+\Phi_v(\bar u,\bar v)\cos\frac{k\pi x}{L}$,  and \[P_2=Q_k\Phi_u(\bar u,\bar v)\psi_1+\Phi_v(\bar u,\bar v)\varphi_1+\Big(\frac{1}{2}\Phi_{uu}(\bar u,\bar v)Q_k^2+\frac{1}{2}\Phi_{vv}(\bar u,\bar v)+\Phi_{uv}(\bar u,\bar v)Q_k\Big)\cos^2\frac{k\pi x}{L}.\]
Equating the $s^2$ terms in (\ref{36}), we have that
\begin{eqnarray}\label{37}
&&D_1\psi_1''-\bar u\psi_1-Q_k^2\cos^2\frac{k\pi x}{L}+\Phi(\bar u,\bar v)\left(\frac{k\pi}{L}\right)^2\mathcal{K}_2\cos\frac{k\pi x}{L}\\ \nonumber
&=&\chi_k\Big(-(Q_k\Phi_u(\bar u,\bar v)+\Phi_v(\bar u,\bar v))\left(\frac{k\pi}{L}\right)^2 \cos\frac{2k\pi x}{L}+\Phi(\bar u,\bar v)\varphi_1''\Big).\\ \nonumber
\end{eqnarray}
Multiplying (\ref{37}) by $\cos\frac{k\pi x}{L}$ and integrating it over $(0,L)$ by parts give rise to
\begin{eqnarray} \label{38}
&&\left(\frac{k\pi}{L}\right)^2\Phi(\bar u,\bar v)\mathcal{K}_2\int_0^L\cos^2\frac{k\pi x}{L}\mathrm{d}x \\ \nonumber
&=&\Big(D_1\big(\frac{k\pi}{L}\big)^2 +\bar u \Big)\int_0^L\psi_1\cos\frac{k\pi x}{L}\mathrm{d}x-\big(\frac{k\pi}{L}\big)^2  \chi_k\Phi(\bar u,\bar v)\int_0^L \varphi_1 \cos\frac{k\pi x}{L}\mathrm{d}x.\\ \nonumber
\end{eqnarray}
Substituting (\ref{31}) into the $v$-equation in (\ref{11}), we obtain that
\begin{align}\label{39}
&D_2\left(\frac{k\pi}{L}\right)^2s \cos\frac{k\pi x}{L}-s^2D_2\varphi''_1-s^3D_2\varphi_2''-o(s^3)+\Big(\bar v + s\cos\frac{k\pi x}{L}+s^2\varphi_1 +s^3\varphi_2  +o(s^3)\Big) \nonumber\\ \nonumber
=&h(\bar u)+h'(\bar u)\big(Q_ks\cos\frac{k\pi x}{L}+\psi_1s^2+\psi_2s^3+o(s^3)\big)+\frac{1}{2}h''(\bar u)\big(Q_ks\cos\frac{k\pi x}{L}+\psi_1s^2+\psi_2s^3+o(s^3)\big)^2 \nonumber \\
&+\frac{1}{6}h'''(\bar u)\big(Q_ks\cos\frac{k\pi x}{L}+\psi_1s^2+\psi_2s^3+o(s^3)\big)^3.\\ \nonumber
\end{align}
Equating the $s^2$ terms in (\ref{39}), we obtain the following equation
\begin{equation}\label{310}
D_2\varphi_1''+h'(\bar u)\psi_1-\varphi_1+\frac{1}{2}Q_k^2h''(\bar u)\cos^2\frac{k\pi x}{L}=0.
\end{equation}
Multiplying (\ref{310}) by $\cos\frac{k\pi x}{L}$ and integrating it over $(0,L)$ by parts, we have that
\begin{equation}\label{311}
h'(\bar u)\int_0^L\psi_1\cos\frac{k\pi x}{L}\mathrm{d}x-\Big(D_2\big(\frac{k\pi}{L}\big)^2+1\Big)\int_0^L\varphi_1\cos\frac{k\pi x}{L}\mathrm{d}x=0
\end{equation}
On the other hand, since $(\psi_1,\varphi_1)\in \mathcal{Z}$,  we have from (\ref{218}) that
\begin{equation}\label{312}
\int_0^L\big(Q_k\psi_1+\varphi_1\big)\cos\frac{k\pi x}{L}\mathrm{d}x=Q_k\int_0^L\psi_1\cos\frac{\varphi x}{L}\mathrm{d}x+\int_0^L\psi_1\cos\frac{k\pi x}{L}\mathrm{d}x=0
\end{equation}
From (\ref{311}) and (\ref{312}), we arrive at the following system
\begin{equation}\label{313}
\begin{pmatrix}
 h'(\bar u)   ~~~&-D_2\left(\frac{k\pi}{L}\right)^2-1 \\
 Q_k            ~~~   & 1
  \end{pmatrix} \begin{pmatrix}
   \int_0^L\psi_1\cos\frac{k\pi x}{L}\mathrm{d}x\\
  \int_0^L\varphi_1\cos\frac{k\pi x}{L}\mathrm{d}x
   \end{pmatrix}=
   \begin{pmatrix}
0 \\
0
  \end{pmatrix}.
   \end{equation}
It is easy to see that the coefficient matrix of (\ref{313}) is nonsingular, therefore we must have that
\begin{equation}\label{314}
\int_0^L\psi_1\cos\frac{k\pi x}{L}\mathrm{d}x=\int_0^L\varphi_1\cos\frac{k\pi x}{L}\mathrm{d}x=0.
\end{equation}
Putting (\ref{314}) into (\ref{38}), we readily see that $\mathcal{K}_2=0$ and hence we have proved the following observation.
\begin{proposition}\label{proposition2}
Assume that the conditions (\ref{17}), (\ref{18}) and (\ref{29}) are satisfied.  Then $\mathcal{K}_2=0$ and the local bifurcation curve of (\ref{11}) at $(\bar u,\bar v,\chi_k)$ is of pitchfork type if $\mathcal{K}_3\neq0$.
\end{proposition}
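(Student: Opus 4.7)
The plan is to execute the standard Lyapunov--Schmidt projection at the second order in $s$. First I would substitute the Crandall--Rabinowitz expansions (\ref{37}) into both equations of (\ref{14}) and equate the coefficients of $s^2$. The $u$-equation yields (\ref{43}), in which $\mathcal{K}_2$ appears explicitly as a forcing coefficient in front of $\cos\frac{k\pi x}{L}$; the $v$-equation yields (\ref{45}), which couples $\psi_1$ and $\varphi_1$ without involving $\mathcal{K}_2$.

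Next I would test both equations against the kernel eigenfunction $\cos\frac{k\pi x}{L}$. Neumann integration by parts converts every $\partial_x^2$ into a factor of $-(k\pi/L)^2$, and by Fourier orthogonality every contribution arising from the quadratic nonlinearities that lives in the constant mode or the $2k$-th mode (e.g. $\cos^2\frac{k\pi x}{L}$ and $\cos\frac{2k\pi x}{L}$) drops out cleanly. What remains is a $2\times 2$ linear system in the two scalars $A := \int_0^L \psi_1 \cos\frac{k\pi x}{L}\,dx$ and $B := \int_0^L \varphi_1 \cos\frac{k\pi x}{L}\,dx$, with a right-hand side proportional to $\mathcal{K}_2$. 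I would close this system by invoking the Crandall--Rabinowitz complement condition $(\psi_1,\varphi_1)\in\mathcal{Z}$ from (\ref{31}), which supplies the extra constraint $Q_k A + B = 0$; the resulting coupled system is exactly (\ref{48}).

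The conclusion is then immediate: the coefficient matrix of (\ref{48}) is nonsingular because its determinant equals $h'(\bar u) + Q_k\bigl(D_2(k\pi/L)^2+1\bigr) = h'(\bar u) + \frac{(D_2(k\pi/L)^2+1)^2}{h'(\bar u)}$, which is strictly positive by (\ref{7}) and the definition of $Q_k$. Hence $A = B = 0$, and feeding this back into the projection of (\ref{43}) forces $\mathcal{K}_2 (k\pi/L)^2 \Phi(\bar u,\bar v) \cdot L/2 = 0$, so $\mathcal{K}_2 = 0$ (noting that $\Phi(\bar u,\bar v)>0$ is implicit in $\chi_k$ being finite). Once $\mathcal{K}_2 = 0$, the expansion reduces to $\chi_k(s) = \chi_k + \mathcal{K}_3 s^2 + o(s^2)$, which is exactly the defining shape of a pitchfork bifurcation whenever $\mathcal{K}_3 \neq 0$.

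The main obstacle, such as it is, lies in the algebraic bookkeeping when expanding $\bigl(\chi \Phi(u,v) v'\bigr)'$ to second order in $s$: one must correctly identify every term that contributes to the $\cos\frac{k\pi x}{L}$-mode after projection, since a single overlooked cross-term between the $O(s)$ expansion of $\Phi$ and the $O(s)$ derivative of $v$ would silently spoil the vanishing of $\mathcal{K}_2$. Condition (\ref{g}) is used only indirectly, as it is the hypothesis under which the bifurcation branch $\Gamma_k$ exists in the first place via Theorem 2.1.
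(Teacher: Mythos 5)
Your proposal is correct and follows essentially the same route as the paper: project the $s^2$-order equations (\ref{43}) and (\ref{45}) onto $\cos\frac{k\pi x}{L}$, combine the projected $v$-equation (\ref{46}) with the complement constraint (\ref{47}) into the nonsingular system (\ref{48}) to conclude $\int_0^L\psi_1\cos\frac{k\pi x}{L}\,dx=\int_0^L\varphi_1\cos\frac{k\pi x}{L}\,dx=0$, and then read off $\mathcal{K}_2=0$ from the projected $u$-equation (\ref{44}). The determinant computation and the final pitchfork conclusion match the paper's argument.
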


We want to mention that it is shown that the local steady state bifurcation for reaction--advection--diffusion system is pitch--fork in general when the domain is a 1D interval.  However this is not necessary true in higher space dimensions.

\subsection{Bifurcation direction}
Now we proceed to calculate the sign of $\mathcal{K}_3$ to determine the turning direction and the stability of $\Gamma_k(s)$ at $(\bar u,\bar v,\chi_k)$.  To this end, we need to collect the $s^3$-terms in (\ref{36}) and (\ref{39}).  For the simplicity of calculations we make the following particular choices of $\Phi$ and $h(u)$,
\begin{equation}\label{315}
\Phi(u,v)=u,~h(u)= \beta u,\beta>0,
\end{equation}
therefore we will study the stability of $(u_k(s,x),v_k(s,x))$ of the following system in the rest part of this section,
\begin{equation}\label{316}
\left\{
\begin{array}{ll}
\big(D_1u'- \chi u v'\big)'+(\bar u-u)u=0,  & x \in(0,L),\\
D_2 v'' -v + \beta u=0, & x \in(0,L),\\
u'(x)=v'(x)=0,&x=0,L.
\end{array}
\right.
\end{equation}
Since $\mathcal{K}_2=0$, we readily see that collecting the $s^2$ terms of (\ref{36}) and (\ref{39}) leads us to
\begin{equation}\label{317}
\left \{
\begin{array}{ll}
D_1\psi''_1-\chi_k \bar u \varphi''_1- \bar u \psi_1+\chi_kQ_k\left(\frac{k\pi}{L}\right)^2\cos \frac{2k\pi x}{L}-Q_k^2\cos^2\frac{k\pi x}{L}=0,&x\in(0,L),\\
D_2\varphi''_1-\varphi_1+\beta \psi_1=0,&x\in(0,L),\\
\psi'_1(x)=\varphi'_1(x)=0,&x=0,L.
\end{array}
\right.
\end{equation}
Moreover, by collecting the $s^3$ terms of (\ref{36}) and (\ref{39}), together with the assumption (\ref{315}) and the fact that $\mathcal{K}_2=0$, we arrive at the following system
\begin{equation}\label{318}
\left \{
\begin{array}{ll}
D_1\psi''_2-\bar u\psi_2-2Q_k\psi_1\cos\frac{k\pi x}{L}+ \chi_kQ_k(\frac{k\pi}{L})\varphi'_1 \sin \frac{k\pi x}{L}+ \chi_k \left(\frac{k\pi}{L}\right) \psi'_1 \sin\frac{k\pi x}{L}\\
-\chi_k \bar u\varphi''_2-\chi_k Q_k \varphi''_1 \cos \frac{k\pi x}{L}+\chi_k\psi_1\left(\frac{k\pi}{L}\right)^2 \cos \frac{k\pi x}{L}+\mathcal{K}_3\bar u\left(\frac{k\pi}{L}\right)^2\cos\frac{k\pi x}{L}=0,&x\in(0,L)\\
D_2\varphi''_2-\varphi_2+\beta \psi_2=0,&x\in(0,L),\\
\psi'_2(x)=\varphi'_2(x)=0,&x=0,L.
\end{array}
\right.
\end{equation}

Following the same arguments that lead to (\ref{314}), we can also show that
\begin{equation}\label{319}
\int_0^L \psi_2\cos \frac{k\pi x}{L}dx=\int_0^L \varphi_2\cos \frac{k\pi x}{L}dx=0.
\end{equation}
Now multiplying (\ref{318}) by $\cos\frac{k\pi x}{L}$ and integrating it over $(0,L)$ by parts, we obtain that
\begin{eqnarray}\nonumber
&&D_1\int_0^L\psi_2''\cos\frac{k\pi x}{L}\mathrm{d}x-\bar u\int_0^L\psi_2\cos\frac{k\pi x}{L}\mathrm{d}x-2Q_k\int_0^L\psi_1\cos^2\frac{k\pi x}{L}\mathrm{d}x\\ \nonumber
&&+\chi_kQ_k\left(\frac{k\pi}{L}\right)\int_0^L\varphi_1'\sin\frac{k\pi x}{L}\cos\frac{k\pi x}{L}\mathrm{d}x+ \chi_k\left(\frac{k\pi}{L}\right)\int_0^L\psi_1'\sin\frac{k\pi x}{L}\cos\frac{k\pi x}{L}\mathrm{d}x\\ \nonumber
&&- \chi_k\bar u\int_0^L\varphi_2''\cos\frac{k\pi x}{L}\mathrm{d}x- \chi_kQ_k\int_0^L\varphi_1''\cos^2\frac{k\pi x}{L}\mathrm{d}x+ \chi_k\left(\frac{k\pi}{L}\right)^2\int_0^L\psi_1\cos^2\frac{k\pi x}{L}\mathrm{d}x \label{320} \\
&&+\bar u\left(\frac{k\pi}{L}\right)^2\mathcal{K}_3\int_0^L\cos^2\frac{k\pi x}{L}=0.
\end{eqnarray}
Substituting (\ref{319}) into (\ref{320}), we have from the integration by parts that
\begin{eqnarray}\nonumber
\frac{\bar uk\pi^2}{2L}\mathcal{K}_3&=&\Big(Q_k-\frac{\chi_k}{2}\left(\frac{k\pi}{L}\right)^2\Big)\int_0^L\psi_1\mathrm{d}x+\Big(Q_k+\frac{\chi_k}{2}\left(\frac{k\pi}{L}\right)^2\Big)\int_0^L\psi_1\cos\frac{2k\pi x}{L}\mathrm{d}x \label{321} \\
&-& \chi_kQ_k\left(\frac{k\pi}{L}\right)^2\int_0^L\varphi_1\cos\frac{2k\pi x}{L}\mathrm{d}x.
\end{eqnarray}
Therefore, in order to calculate $\mathcal{K}_3$, we will need to evaluate the following integrals: \[\int_0^L\psi_1\mathrm{d}x, \int_0^L\psi_1\cos\frac{2k\pi x}{L}\mathrm{d}x, \text{~and~} \int_0^L\varphi_1\cos\frac{2k\pi x}{L}\mathrm{d}x.\]
To compute the last two integrals, we multiply the first equation (\ref{317}) by $\cos\frac{2k\pi x}{L}$ and integrate it over $(0,L)$.  Then through straightforward calculations we obtain that
\begin{align}\label{322}
   &-\Big(D_1\big(\frac{2k\pi}{L}\big)^2+\bar u\Big)\int_0^L\psi_1\cos\frac{2k\pi x}{L}\mathrm{d}x+\chi_k\bar u \Big(\frac{2k\pi}{L}\Big)^2\int_0^L\varphi_1\cos\frac{2k\pi x}{L}\mathrm{d}x  \nonumber \\
   =&-\frac{Q^2_kL}{2\bar u}\Big(D_1\big(\frac{k\pi}{L} \big)^2+\frac{\bar u}{2} \Big).
\end{align}
Multiplying the second equation in (\ref{317}) by $\cos\frac{2k\pi x}{L}$ and integrating it over $(0,L)$ by parts, we have from straightforward calculations that
\begin{equation}\label{323}
\frac{D_2\big(\frac{2k\pi}{L}\big)^2+1}{\beta} \int_0^L\varphi_1\cos\frac{2k\pi x}{L}\mathrm{d}x=Q_{2k}\int_0^L\varphi_1\cos\frac{2k\pi x}{L}\mathrm{d}x= \int_0^L\psi_1\cos\frac{2k\pi x}{L}\mathrm{d}x,
\end{equation}
where we use the notation that $Q_{2k}=\frac{D_2\big(\frac{2k\pi}{L}\big)^2+1}{\beta}$.  We see that equations (\ref{322}) and (\ref{323}) are equivalent to
\begin{equation*}
\begin{pmatrix}
-\Big(D_1\big(\frac{2k\pi}{L}\big)^2+\bar u\Big)  &\chi_k \bar u\left(\frac{2k\pi}{L}\right)^2 \\
\beta             & -\Big(D_2\big(\frac{2k\pi}{L}\big)^2+1\Big)
  \end{pmatrix} \begin{pmatrix}
   \int_0^L\psi_1\cos\frac{2k\pi x}{L}\mathrm{d}x\\
  \int_0^L\varphi_1\cos\frac{2k\pi x}{L}\mathrm{d}x
   \end{pmatrix}=\begin{pmatrix}
-\frac{Q^2_kL}{2\bar u}\big(D_1\big(\frac{k\pi}{L} \big)^2+\frac{\bar u}{2} \big) \\
0
  \end{pmatrix}.
   \end{equation*}
We note that this system is solvable thanks to (\ref{29}) (which implies that $\chi_{k}\neq\chi_{2k}$) since bifurcation occurs at $(\bar u,\bar v,\chi_k)$.  Moreover we can have from straightforward calculations that,
\begin{equation}\label{324}
\int_0^L\psi_1\cos\frac{2k\pi x}{L}\mathrm{d}x=\frac{\big(D_2\left(\frac{2k\pi}{L}\right)^2+1\big) \frac{Q^2_KL}{2\bar u}\big(D_1\big(\frac{k\pi}{L} \big)^2+\frac{\bar u}{2} \big)}{12D_1D_2\left(\frac{k\pi}{L}\right)^4-3\bar u};
\end{equation}
\begin{equation}\label{325}
\int_0^L\varphi_1\cos\frac{2k\pi x}{L}\mathrm{d}x=\frac{\beta\frac{Q^2_KL}{2\bar u}\big(D_1\big(\frac{k\pi}{L} \big)^2+\frac{\bar u}{2} \big)}{12D_1D_2\left(\frac{k\pi}{L}\right)^4-3\bar u};
\end{equation}
on the other hand, integrating the first equation in (\ref{317}) over $(0,L)$ by parts leads us to
\begin{equation}\label{326}
\int_0^L\psi_1\mathrm{d}x=-\frac{Q_k^2L}{2\bar u}.
\end{equation}
By putting (\ref{323})-(\ref{326}) together, we conclude from (\ref{321}) and straightforward calculations that
\begin{equation}\label{327}
\frac{\bar uk\pi^2}{2L}\mathcal{K}_3=\frac{Q_k^3L}{16D_2(\frac{k\pi}{L})^4}\frac{F(D_1)}{D_1-\frac{\bar u}{4D_2}\left(\frac{L}{k\pi}\right)^4}=\frac{Q_k^3L}{16D_2(\frac{k\pi}{L})^4}\frac{aD_1^2+bD_1+c}{D_1-\frac{\bar u}{4D_2}\left(\frac{L}{k\pi}\right)^4},
\end{equation}
where \[a=\frac{14D_2(\frac{k\pi}{L})^6-(\frac{k\pi}{L})^4}{\bar u^2},~b=-\frac{2D_2(\frac{k\pi}{L})^4+5(\frac{k\pi}{L})^2}{2\bar u},~c=5D_2\Big(\frac{k\pi}{L}\Big)^2+\frac{7}{2}.\]
Moreover, if $D_2\in\Big(0,\frac{113}{1116}\big(\frac{k\pi}{L}\big)^2\Big)$, the quadratic equation $F(r)=0$ has two roots
\[r_1=\frac{\bar u\Big(2D_2\big(\frac{k\pi}{L}\big)^2+5-\sqrt{-1116D_2^2\big(\frac{k\pi}{L}\big)^4-684D_2\big(\frac{k\pi}{L}\big)^4+81} \Big)}{14D_2\big(\frac{k\pi}{L}\big)^4-\big(\frac{k\pi}{L}\big)^2}\]
and \[r_2=\frac{\bar u\Big(2D_2\big(\frac{k\pi}{L}\big)^2+5+\sqrt{-1116D_2^2\big(\frac{k\pi}{L}\big)^4-684D_2\big(\frac{k\pi}{L}\big)^4+81} \Big)}{14D_2\big(\frac{k\pi}{L}\big)^4-\big(\frac{k\pi}{L}\big)^2};\]
In particular, if $D_2=\frac{1}{14}\big(\frac{k\pi}{L}\big)^2$, we have that $r_1=r_2=\frac{c}{b}$.  Now by denoting
\[r_3=\frac{\bar u}{4D_2}\big(\frac{L}{k\pi}\big)^4,\]
we present the following results that characterize the sign of $\mathcal{K}_3$ hence the turning direction of the bifurcation branch $\Gamma_k$ around $(\bar u,\bar v,\chi_k)$.
\begin{theorem}\label{theorem31}
The bifurcation curve $\Gamma_k(s)$ of (\ref{316}) around $(\bar u,\bar v,\chi_k)$ turns to the right if $\mathcal{K}_3>0$ and to the left if $\mathcal{K}_3<0$.  Moreover, we have the following cases:

(i)  when $D_2\in(0,\frac{1}{14}\left(\frac{L}{k\pi}\right)^2]$, we have that $\mathcal{K}_3>0$ if $D_1\in(r_2,r_3)$ and $\mathcal{K}_3<0$ if $D_1\in(0,r_2)\cup(r_3,\infty)$;

(ii) when $D_2\in(\frac{1}{14}\left(\frac{L}{k\pi}\right)^2,\frac{1}{10}\left(\frac{L}{k\pi}\right)^2)$, we have that $r_1<r_3<r_2$ and $\mathcal{K}_3>0$ if $D_1\in(r_1,r_3)\cup(r_2,\infty)$ and $\mathcal{K}_3<0$ if $D_1\in(0,r_1)\cup(r_3,r_2)$;

(iii) when $D_2=\frac{1}{10}\left(\frac{L}{k\pi}\right)^2$, we have that $r_1=r_3$ and $\mathcal{K}_3>0$ if $D_1\in(r_2,\infty)$ and $\mathcal{K}_3<0$ if $D_1\in(0,r_1)\cup(r_1,r_2)$;

(iv) when $D_2\in(\frac{1}{10}\left(\frac{L}{k\pi}\right)^2,\frac{113}{1116}\left(\frac{L}{k\pi}\right)^2)$, we have that $r_3<r_1<r_2$ and $\mathcal{K}_3>0$ if $D_1\in(r_3,r_1)\cup(r_2,\infty)$ and $\mathcal{K}_3<0$ if $D_1\in(0,r_3)\cup(r_1,r_2)$;

(v) when $D_2=\frac{113}{1116}\left(\frac{L}{k\pi}\right)^2$, we have that $r_1=r_2$ and $\mathcal{K}_3>0$ if $D_1\in(r_3,r_1)\cup(r_1,\infty)$ and $\mathcal{K}_3<0$ if $D_1\in(0,r_3)$;

(vi) when $D_2\in(\frac{113}{1116}\left(\frac{L}{k\pi}\right)^2,\infty)$, we have that $\mathcal{K}_3>0$ if $D_1\in(r_3,\infty)$ and $\mathcal{K}_3<0$ if $D_1\in(0,r_3)$.
\end{theorem}

\begin{proof} We observe from (\ref{327}) that $\mathcal{K}_3$ has the same sign as  \[\frac{F(D_1)}{D_1-\frac{\bar u}{4D_2}\left(\frac{L}{k\pi}\right)^4}=\frac{aD_1^2+bD_1+c}{D_1-\frac{\bar u}{4D_2}\left(\frac{L}{k\pi}\right)^4}.\]We divide our discussions into the following cases.

If $D_2\leq\frac{1}{14}\big(\frac{L}{k\pi}\big)^2$, we see that $a\leq0$ and $F(0)>0$, therefore $F(D_1)=0$ always has two roots $r_1$ and $r_2$ with $r_1<0<r_2$ if $a<0$ and one root $r_2>0$ if $a=0$.  On the other hand, we can have from straightforward calculations that \[F(r_3)=\frac{1}{D_2(\frac{k\pi}{L})^2}\Big(\frac{7}{8}-\frac{1}{16D_2(\frac{k\pi}{L})^2}\Big)-\Big(\frac{1}{4}+\frac{5}{8D_2(\frac{k\pi}{L})^2}\Big)+5D_2\left(\frac{k\pi}{L}\right)^2+\frac{7}{2}=0\]
has two negative roots $D_2=-\frac{1}{2}\big(\frac{L}{k\pi}\big)^2$, $D_2=-\frac{1}{4}\big(\frac{L}{k\pi}\big)^2$, and one positive root $D_2=\frac{1}{10}\big(\frac{L}{k\pi}\big)^2$.  Moreover, $F(r_3)$ is monotone increasing in $D_2$ for all $D_2>0$, $F(r_3)<0$ if $D_2<\frac{1}{10}\big(\frac{L}{k\pi}\big)^2$ and $F(r_3)>0$ if $D_2>\frac{1}{10}\big(\frac{L}{k\pi}\big)^2$.  Hence we have that $0<r_3<r_2$ in this case and the conclusions in case \emph{(i)} hold.

When $D_2>\frac{1}{14}\big(\frac{L}{k\pi}\big)^2$, we have that $a>0$ and $F(0)>0$.  On the other hand, we see the determinant of the quadratic equation $F(D_1)=0$ in (\ref{327}) is  \[\bigtriangleup=\frac{-1116D_2^2(\frac{k\pi}{L})^8-684D_2(\frac{k\pi}{L})^6+81(\frac{k\pi}{L})^4}{4 \bar u^2},\]
and it follows from straightforward calculations that $\bigtriangleup >0$ if $D_2\in(0,\frac{113}{1116}\left(\frac{L}{k\pi}\right)^2)$ and $\bigtriangleup <0$ if $D_2\in(\frac{113}{1116}\left(\frac{L}{k\pi}\right)^2,\infty)$.  Recall that $F(r_3)$ has a unique positive root $D_2=\frac{1}{10}\big(\frac{L}{k\pi}\big)^2$ and now we continue our analysis in the following subcases:

If $D_2\in(\frac{1}{14}\big(\frac{L}{k\pi}\big)^2,\frac{113}{1116}\big(\frac{L}{k\pi}\big)^2)$, then we have that $F(D_1)=0$ has two positive roots $r_1$ and $r_2$ with $r_1<r_3<r_2$, since $F(r_3)<0$ in this case.  Therefore we arrive at the conclusions in case \emph{(ii)}.

If $D_2=\frac{1}{10}\big(\frac{L}{k\pi}\big)^2$, we have that $F(r_3)=0$ and in particular we have that $r_3=r_1$.  Thus $\mathcal{K}_3$ is a straight line as a function of $D_1$, which implies case \emph{(iii)}.

If $D_2\in(\frac{1}{10}\big(\frac{L}{k\pi}\big)^2,\frac{113}{1116}\big(\frac{L}{k\pi}\big)^2)$, we have that $F(r_3)>0$.  Following the same arguments in case 2, we can show case \emph{(iv)} since one has in this case that $r_3<r_1<r_2$.  Moreover, if $D_2=\frac{113}{1116}\big(\frac{L}{k\pi}\big)^2$, we have that $r_3<r_1=r_2$, and this implies the statements in case \emph{(v)}.

Finally, if $D_2\in(\frac{113}{1116}\big(\frac{L}{k\pi}\big)^2,\infty)$, then the determinant of $F(D_1)$ is negative and $F(D_1)>0$ for all $D_1>0$.  Therefore, we have that $\mathcal{K}_3>0$ if $D_1>r_3$ and $\mathcal{K}_3<$ if $D_1<r_3$.  This finishes the proof of Theorem \ref{theorem31}.
\end{proof}

The graphes of $\mathcal{K}_3$ in Theorem \ref{theorem31} are illustrated in figures (1)-(6).  As we can see in Theorem \ref{theorem31}, $\mathcal{K}_3$ has a singularity at $D_1=\frac{\bar u}{4D_2}\Big(\frac{L}{k\pi}\Big)^4$.  However, we already show in (\ref{29}), bifurcation occurs at $(\bar u,\bar v,\chi_k)$ only if $D_1\neq\frac{\bar u}{4D_2}\Big(\frac{L}{k\pi}\Big)^4$.  Actually, if $D_1=\frac{\bar u}{4D_2}\Big(\frac{L}{k\pi}\Big)^4$, then we have that $\mathcal{K}_3=\infty$ and formally the curve $\Gamma(s)$ must coincide with the $\chi$-axis.  However we do not consider this singularity case in this paper.

\subsection{Stability analysis}
To study the stability of $(u_k(s,x),v_k(s,x),\chi_k(s))$ around $(\bar u ,\bar v,\chi_k)$, we linearize (\ref{316}) around this bifurcating solution.  According to the principle of the linearized stability e.g. Theorem 8.6 in \cite{CR2}, to show that they are asymptotically stable, we need to prove that the each eigenvalue $\lambda$ of the following elliptic problem has negative real part:
\[D_{(u,v)}\mathcal{F}(u_k(s,x),v_k(s,x),\chi_k(s))(u,v)=\lambda (u,v),~(u,v)\in\mathcal{X} \times \mathcal{X}.\]
We readily see that this eigenvalue problem is equivalent to
\begin{equation}\label{328}
\left\{
\begin{array}{ll}
  D_1 \Big(u' - \chi_k(s)\big(uv'_1(s,x)+u_k(s,x)v'\big) \Big)' + (\bar u-2u_k(s,x))u=\lambda u,&x\in(0,L), \\
  D_2 v'' -v_k(s,x)+\beta u_k(s,x)=\lambda v,&x\in(0,L),\\
u'(x)=v'(x)=0,&x=0,L,
\end{array}
\right.
\end{equation}
where $u_k(s,x)$, $v_k(s,x)$ and $\chi_k(s)$ are as defined in Theorem \ref{theorem21}.

We first observe that $0$ is a simple eigenvalue of $D_{(u,v)}\mathcal{F}(\bar u,\bar v,\chi_k)$ with an eigen-space equal to $\text{span}\{(Q_k\cos \frac{k\pi x}{L},\cos \frac{k\pi x}{L})\}$.  In the coming analysis we assume that \[\min_{k\in\mathbb N^+}\chi_k=\chi_{k_0}.\]
For each $k\neq k_0$, we already know from Proposition \ref{proposition1} that when $s=0$, (\ref{328}) has an eigenvalue with positive real part, then from the standard eigenvalue perturbation theory e.g in \cite{Kato}, it always has a positive root for small $s$.  This implies that the bifurcation branch $\Gamma_k(s)$ around $(\bar u,\bar v,\chi_k)$ is unstable for each $k\in\mathbb N^+\backslash\{k_0\}$.

In the following we are left to investigate the stability of $\Gamma_{k_0}(s)$ for $|s|$ being small.  It follows from Corollary 1.13 in \cite{CR2} that, there exist an internal $I$ with $\chi_{k_0}\in I$ and continuously differentiable functions $\chi\in I\rightarrow \mu(\chi),~s\in(-\delta,\delta) \rightarrow \lambda(s)$ with $\lambda(0)=0$ and $\mu(\chi_{k_0})=0$ such that, $\lambda(s)$ is an eigenvalue of (\ref{328}) and $\mu(\chi)$ is an eigenvalue of the following eigenvalue problem
\begin{equation}\label{329}
D_{(u,v)}\mathcal{F}(\bar u,\bar v,\chi)(u,v)=\mu(u,v),~(u,v)\in \mathcal{X} \times \mathcal{X};
\end{equation}
moreover, $\lambda(s)$ is the only eigenvalue of (\ref{328}) in any fixed neighbourhood of the origin of the complex plane (the same assertion can be made on $\mu(\chi)$).  We also know from \cite{CR2} that the eigenfunctions of (\ref{329}) can be represented by $(u(\chi,x),v(\chi,x))$ which depend on $\chi$ smoothly and are uniquely determined through $\big(u(\chi_{k_0},x),v(\chi_{k_0},x)\big)=\big(Q_k\cos \frac{{k_0}\pi x}{L},\cos \frac{{k_0}\pi x}{L} \big)$ together with $\big(u(\chi,x)-Q_k\cos \frac{{k_0}\pi x}{L},v(\chi,x)-\cos \frac{{k_0}\pi x}{L} \big)  \in \mathcal{Z}$.

Now we are ready to present another main result of our paper.
\begin{theorem}\label{theorem32}
For $s\in(-\delta,\delta)$, $s\neq0$, the solution $(u_{k_0}(s,x),v_{k_0}(s,x))$ of (\ref{316}) is asymptotically stable if $\mathcal{K}_3>0$ and it is unstable if $\mathcal{K}_3<0$.  For each $k\in\mathbb N^+\backslash\{k_0\}$, the bifurcating solutions $(u_{k}(s,x),v_{k}(s,x))$, $s\in(-\delta,\delta)$ are always unstable.
\end{theorem}

\begin{proof}
We differentiate (\ref{329}) with respect to $\chi$ and set $\chi=\chi_{k_0}$, then since $\mu(\chi_{k_0})=0$, we arrive at the following system
\begin{equation}\label{330}
\left\{
\begin{array}{ll}
  D_1 \dot{u}''-\bar u v_1''-\chi_{k_0}\bar u\dot{v}''-\bar u\dot{u}=\dot{\mu}(\chi_{k_0})u_1,&x\in(0,L), \\
  D_2 \dot{v}'' -\dot{v}+\beta \dot{u}=\dot{\mu}(\chi_{k_0})v_1,&x\in(0,L),\\
\dot{u}'(x)=\dot{v}'(x)=0,&x=0,L,
\end{array}
\right.
\end{equation}
where $(u_k,v_k)=(Q_k \cos \frac{{k_0}\pi x}{L},\cos \frac{{k_0}\pi x}{L})$.  The dot-sign means the differentiation with respect to $\chi$ evaluated at $\chi=\chi_{k_0}$ and in particular $\dot{u}=\frac{\partial u(\chi,x)}{\partial \chi}\big\vert_{\chi=\chi_{k_0}}$, $\dot{v}=\frac{\partial v(\chi,x)}{\partial \chi}\big\vert_{\chi=\chi_{k_0}}$.

Multiplying both equations of (\ref{330}) by $\cos\frac{{k_0}\pi x}{L}$ and then integrating over $(0,L)$ by parts, we obtain that
\[\begin{pmatrix}
-D_1 \big(\frac{{k_0}\pi}{L}\big)^2-\bar u & \chi_{k_0}\bar u \big(\frac{ {k_0}\pi}{L}\big)^2\\
\beta & -D_2\big(\frac{{k_0}\pi}{L}\big)^2-1
\end{pmatrix}
\begin{pmatrix}
\int_0^L \dot{u}\cos\frac{{k_0}\pi x}{L}dx \\
~~\\
\int_0^L \dot{v}\cos\frac{{k_0}\pi x}{L}dx
\end{pmatrix}=\begin{pmatrix}
\Big(\dot{\mu}(\chi_{k_0})Q_{k_0}-\bar u\big(\frac{{k_0}\pi}{L} \big)^2\Big) \frac{L}{2}\\
~~\\
\dot{\mu}(\chi_{k_0})\frac{L}{2}
\end{pmatrix}.
\]
We know that the coefficient matrix is singular, hence in order for the system above to be solvable, we must have that
\[\dot{\mu}(\chi_{k_0})=\frac{\beta\bar u(\frac{{k_0}\pi}{L})^2}{D_1(\frac{{k_0}\pi}{L})^2+\beta Q_{k_0}+\bar u},\]
which is strictly positive.  By Theorem 1.16 in \cite{CR2}, for $s\in(-\delta,\delta)$, the functions $\lambda(s)$ and $-s\chi'_{k_0}(s)\dot{\mu}(\chi_{k_0})$ have the same zeros and the same signs.  Moreover
\[\lim_{s\rightarrow 0,~\lambda(s)\neq0}\frac{-s\chi'_{k_0}(s)\dot{\mu}(\chi_{k_0})}{\lambda(s)}=1.\]
Now, since $\mathcal{K}_2=0$, it follows that $\lim_{s\rightarrow 0} \frac{s^2\mathcal{K}_3 \dot{\mu}(\chi_{k_0})}{\lambda(s)}=-1$ and we readily see that $\text{sgn}(\lambda(s))=\text{sgn}(-\mathcal{K}_3)$ for $s\in(-\delta,\delta)$, $s\neq0$.  Therefore we have proved Theorem \ref{theorem32}.
\end{proof}

We conclude from Theorem \ref{theorem31} and Theorem \ref{theorem32} that, if $D_1$ is small, the small amplitude bifurcating solution $(u_{k_0}(s,x),v_{k_0}(s,x))$ is unstable for all $D_2>0$.  If $D_1$ is large, $(u_{k_0}(s,x),v_{k_0}(s,x))$ is unstable for $D_2<\frac{1}{14}(\frac{L}{k\pi})^2$ and is stable for $D_2>\frac{1}{14}(\frac{L}{k\pi})^2$.  Therefore the smallness of one of the diffusion rates $D_1$ and $D_2$ is sufficient to inhibit the stability of this small amplitude solution and we may expect solutions of large amplitude in this case as we shall see in the next section.

\section{Asymptotic behavior of positive monotone solutions}\label{section4}
We consider the following system
\begin{equation}\label{41}
\left\{
\begin{array}{ll}
\big(D_1u'- \chi \Phi(u,v) v'\big)'+(\bar u-u)u=0,  & x \in(0,L),\\
D_2 v'' -v + h(u)=0, & x \in(0,L),\\
u'(x)<0,v'(x)<0,& x \in(0,L),\\
u'(x)=v'(x)=0,&x=0,L.
\end{array}
\right.
\end{equation}
and the main purpose of this section is to study the asymptotic behavior of positive solutions to (\ref{41}) as $\chi/D_1$ approaches to infinity.  In contrast to the small amplitude solutions obtained in Theorem \ref{theorem21}, we are interested in studying the existence of nonconstant positive solutions to (\ref{41}) that have large amplitudes.  The last set of our main results can be summarized as follows.
\begin{theorem}\label{theorem41}
Assume that the conditions (\ref{16})-(\ref{19}) are satisfied.  Let $(u_i,v_i)$ be a positive solution of (\ref{41}) with $(D_1,\chi)=(D_{1,i},\chi_i)$.  Then we have that
\begin{equation}\label{42}
\lim_{i\rightarrow \infty}\int_0^L u_i(x)dx \leq \bar u L;
\end{equation}
moreover, the following conclusions hold, after passing to a subsequence if necessary:

\emph{(i)}  Assume that $ \frac{\chi_i}{D_{1,i}}\rightarrow \infty$ as $\chi_i\rightarrow \infty$.  Then we have that, either $u_i \rightarrow \bar u_\infty$ locally uniformly in $(0,L]$ and $v_i \rightarrow h(\bar u_\infty)$ in $C^1([0,L])$, where $\bar u_\infty\leq \bar u$ is a positive constant or $u_i \rightarrow 0$ locally uniformly in $(0,L]$ and $v_i \rightarrow 0$ in $C^1([0,L])$; $u_i(0) \rightarrow u_\infty(0)\geq\bar u$ in both cases.

\emph{(ii)}  Assume that $ \frac{\chi_i}{D_{1,i}}\rightarrow a\in[0,\infty)$ as $\chi_i\rightarrow \infty$ or $D_{1,i} \rightarrow \infty$ (so $\chi_\infty$ is comparably large).  Then we have that $u_i\rightarrow u_\infty$ in $C^1([0,L])$ and $v_i\rightarrow v_\infty$ in $C^2([0,L])$, where either $(u_\infty,v_\infty)\equiv(\bar u,h(\bar u))$ or it is a nonconstant positive solution of the following system
\begin{equation}\label{43}
\left\{
\begin{array}{ll}
u'_\infty-a\Phi(u_\infty,v_\infty)v'_\infty=0, & x \in(0,L),\\
D_2 v''_\infty-v_\infty+h(u_\infty)=0,& x \in(0,L),\\
u'_\infty(x)=v'_\infty(x)=0,&x=0,L;
\end{array}
\right.
\end{equation}
moreover $u_\infty(x)>0$ in $[0,L)$ and $v_\infty(x)>0$ in $[0,L]$.
\end{theorem}
In case \emph{(i)}, i.e., when the chemotaxis effect is greatly stronger than the cell motility, if the first alternative occurs, cell population density and chemical concentration approach to a homogeneous station which is below the environment carrying capacity. If the second alternative in case \emph{(i)} occurs, $u_\infty$ concentrates at $x=0$ and $u_i$ takes the form of a boundary spike at $x=0$ for $\chi_i$ being large.  Though a $\delta$--type aggregation/singularity is possible, the total population of cell shrinks to zero and the chemical is fully consumed.  In case \emph{(ii)}, i.e., when the chemotaxis rate and cell motility are comparably strong, we have that the total cell population (hence chemical concentration) is positive in both alternatives; moreover the cell density matches the environment carrying capacity if it approaches to the homogeneous station in the first alternative, while one expects spatial patterns of cell density and chemical concentration in the second alternative.  Our results suggest that in the limit of large chemotaxis attraction, cellular motility tends to supports cell proliferation of chemotaxis models with logistic kinetics.  It is therefore interesting and biologically important to find or characterize optimal chemotaxis rate and/or cell motility that maximizes the total cell population.  We refer to \cite{Lou1,Lou2} for the discussion on some related population dynamics models.
\begin{remark}
There are extensive works available in literature investigating (\ref{43}) and its time-dependent multi-dimensional counterparts.  For example, if $\Phi(u,v)=u$ and $h(u)=\beta u$ for a constant $\beta>0$, Biler \cite{B} established the existence of nonconstant radially symmetric solutions of (\ref{43}) over domain $\Omega$ in $\mathbb{R}^N$, $N\geq1$.   For $\Phi(u,v)=\frac{u}{v}$ and $h(u)=\beta u$, we have that $u_\infty=Cv^a$ for some positive constant $C$.  It then follows from the classical results of Lin \emph{et al.} \cite{LNT} and Ni-Takagi \cite{NT,NT2} that, for $D_2$ being sufficient small and $a\in(1,\infty)$, (\ref{43}) admits nonconstant positive solutions with $v_\infty$ concentrating at $x=0$, which also has the form of a boundary spike.  Global existence and boundary spike solution on a plat form of (\ref{43}) in multi-dimension with $\Phi(u,v)=\frac{u}{v+c}$, $c$ being a positive constants are investigated in \cite{Wq1,Wq2}.  The analysis of (\ref{43}) with general $\Phi$ and $h$ is a delicate problem which is out of scope of this paper.  We also want to mention that the time--dependent system of (\ref{11}) has quite rich spatial-temporal dynamics as demonstrated by the numerical studies in \cite{EIM,PH}.  An alternative way to establish nontrivial solutions to (\ref{41}) is to study its shadow system.  See \cite{KT,TKMI,WGY} for example.
\end{remark}
Before proving Theorem \ref{theorem41}, we first give the following observation.
\begin{lemma}\label{lemma42}
Let $(u_i,v_i)$ be a positive solution of (\ref{41}).  Then for any $x\in(0,L]$, $\lim\sup_{i\rightarrow \infty}u_i(x)<\infty$.
\end{lemma}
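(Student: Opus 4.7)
The plan is to combine the $L^1$ (equivalently $L^2$) bound on $u$ from Lemma 2.2 with the monotonicity assumed in (\ref{66}) to get a pointwise bound of the form $u_i(x)\le C/x$ that is uniform in $i$.

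First I would recall that Lemma 2.2 gives $\|u_i\|_{L^2(0,L)}\le\bar u\sqrt{L}$; combined with the identity $\|u_i\|_{L^2}^2=\bar u\|u_i\|_{L^1}$ (coming from integrating the first equation of (\ref{66}) over $(0,L)$ by parts), this yields the $L^1$-bound
\[
\|u_i\|_{L^1(0,L)}\le\bar u L,
\]
which is independent of $\chi_i$ and $D_{1,i}$.

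Next I would use the assumed monotonicity $u_i'<0$ on $(0,L)$. For any fixed $x\in(0,L]$, since $u_i(y)\ge u_i(x)$ for all $y\in[0,x]$, one has
\[
x\,u_i(x)\le\int_0^x u_i(y)\,dy\le\|u_i\|_{L^1(0,L)}\le\bar u L,
\]
so that
\[
u_i(x)\le\frac{\bar u L}{x}\qquad\text{for every }x\in(0,L]\text{ and every }i.
\]
Taking $\limsup_{i\to\infty}$ on both sides gives the conclusion.

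I do not expect any real obstacle here: both ingredients (the a priori $L^1$-estimate and the monotonicity) are already available, and they combine to give the pointwise bound by a one-line argument. The only thing worth flagging is that the bound $\bar u L/x$ does blow up as $x\downarrow 0$, which is consistent with (and in fact anticipates) the possibility that $u_i(0)$ is unbounded, i.e.\ the emergence of a boundary spike at $x=0$ described in Remark~4.1.
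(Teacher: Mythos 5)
Your proof is correct, but it is not the argument the paper uses. The paper proceeds by contradiction through the $v$-equation: assuming $u_i(x_0)\to\infty$ at some $x_0\in(0,L]$, monotonicity forces $u_i\to\infty$ on all of $[0,x_0]$, and then integrating $D_2v_i''-v_i+h(u_i)=0$ over $(0,L)$ gives $\int_0^L v_i=\int_0^L h(u_i)\ge\int_0^{x_0}h(u_i)\to\infty$, contradicting the uniform bound $\Vert v_i\Vert_{H^2}\le C$ of Lemma~2.2. You instead work entirely with the $u$-component: the identity $\bar u\Vert u_i\Vert_{L^1}=\Vert u_i\Vert_{L^2}^2\le\bar u^2L$ from Lemma~2.2 gives $\Vert u_i\Vert_{L^1}\le\bar uL$, and monotone decrease yields $x\,u_i(x)\le\int_0^x u_i\le\bar uL$, i.e.\ the explicit bound $u_i(x)\le\bar uL/x$ uniform in $i$. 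Your route is more elementary and in one respect more robust: it needs no information about $v$ or the function $h$ at all, whereas the paper's contradiction step tacitly requires $\int_0^{x_0}h(u_i)\to\infty$, which uses that $h$ is unbounded (conditions (1.7)--(1.8) alone, e.g.\ a bounded increasing $h$, would not supply this). What the paper's argument buys in exchange is that it exploits the already-established uniform $H^2$ control of $v_i$, which is the quantity used repeatedly in the rest of Section~4, while your estimate $\bar uL/x$ has the added benefit of quantifying the admissible blow-up rate at the boundary, consistent with the boundary-spike scenario of Remark~4.1. Both proofs are valid under the paper's standing hypotheses.
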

\begin{proof}
We argue by contradiction and assume that there exists $x_0\in(0,L]$ and a sequence $i\rightarrow \infty$ such that $\lim_{i\rightarrow \infty}u_i(x_0)=\infty$.  Then $u_i(x) \rightarrow \infty$ for all $x\in[0,x_0]$ since $u_i$ is monotone deceasing.  By integrating the $v$-equation in (\ref{41}) over $(0,L)$, we have that
\[\int_0^L v_i(x)dx=\int_0^L h(u_i(x))dx\geq \int_0^{x_0} h(u_i(x))dx \rightarrow \infty,\]
which is a contraction to the uniform boundedness of $\Vert v_i \Vert_{H^2}$ in (\ref{214}).
\end{proof}
\begin{proof}[Proof\nopunct] \emph{of (i) of Theorem} \ref{theorem41}:  First of all, we readily see that (\ref{42}) follows from (\ref{213}).  By Lemma \ref{lemma22}, the monotonicity of $u_i$ and Helly's theorem, after passing to a subsequence as $i \rightarrow \infty$, there exists a function $u_\infty$ which is nonnegative and nonincreasing  on $[0,L]$ such that $u_i(x) \rightarrow u_\infty(x) \text{ pointwise on }(0,L]$.  Moreover, thanks to (\ref{214}) and the compact embedding $H^2(0,L) \subset \subset C^1([0,L])$, we have that, after passing to yet another subsequence as $i\rightarrow \infty$, $v_i(x) \rightarrow v_\infty(x) \text{ in } C^1([0,L])$, where $v_\infty$ is also nonincreasing on $[0,L]$.

We integrate the $u$-equation in (\ref{41}) over $(0,L)$ and have from Lemma \ref{lemma22} and Fatou's lemma that
\begin{equation}\label{44}
\int_0^L u_\infty\leq \lim_{i\rightarrow \infty}\inf\int_0^L u_i\leq \bar uL.
\end{equation}
On the other hand, we integrate the $u$-equation over the interval $(x,L)$ and have that
\begin{equation}\label{45}
D_{1,i}u'_i(x)-\chi_i\Phi(u_i,v_i)v'_i(x)=F_i(x),
\end{equation}
where $F_i(x)=\int_x^L(\bar u-u_i)u_i$.  Integrating (\ref{45}) from $x$ to $L$, dividing it by $\chi_i$ and then sending $i\rightarrow \infty$, we obtain from Lemma \ref{lemma22} that $\int_x^L\Phi(u_\infty(y),v_\infty(y))v'_\infty(y) dy=0$ for any $x$ in $(0,L]$, which implies that
\begin{equation}\label{46}
\Phi(u_\infty,v_\infty)v'_\infty(x)=0,~\text{a.e.}~x\in[0,L];
\end{equation}
Moreover, we can show that $v_\infty $ satisfies
\begin{equation}\label{47}
\left\{
\begin{array}{ll}
D_2v''_\infty-v_\infty+h(u_\infty)=0, & x \in(0,L),\\
v'_\infty(x)=v'_\infty(x)=0,&x=0,L.
\end{array}
\right.
\end{equation}

We now divide our discussions into the following two cases.  If $u_\infty\equiv \bar u_\infty$ is a positive constant, we must have from (\ref{47}) that $v_\infty\equiv  h(\bar u_\infty)$.  Moreover we have from (\ref{44}) that $\bar u_\infty\leq \bar u$.  If $u_\infty\not \equiv \bar u_\infty>0$ for $x\in(0,L]$, we claim that $u_i\rightarrow 0$ in $(0,L]$.  We argue by contradiction and suppose that there exists $x_0\in(0,L)$ such that $u_\infty>0$ for $x\in(0,x_0)$ and $u_\infty\equiv 0$ for $x\in(x_0,L]$.  There are two possibilities to consider: (a). $x_0=L$.  In this case $u_\infty(x)>0$ in $[0,L]$, which implies through (\ref{46}) that $v'_\infty(x)\equiv0$ hence $v_\infty$ is a constant, therefore according to (\ref{47}) $u_\infty$ must also be a constant, denoted by $\bar u_\infty$, however this is impossible according to our assumption that $u_\infty \not\equiv \bar u_\infty$ unless $\bar u_\infty=0$ which is just what we want to prove; (b). $x_0\in(0,L)$.  In this case, we have from (\ref{17}) and (\ref{46}) that $v'_\infty\equiv0$ in $[0,x_0^{-}]$.  Note that $v\in C^1([0,L])$.  Therefore we have $v_\infty\equiv$ \emph{one positive constant} in $[0,x_0^{-}]$ hence $u_\infty\equiv$ \emph{another positive constant} in $(0,x_0)$ in light of (\ref{47}), otherwise $u_\infty\equiv 0$ in $(0,x_0]$, hence in $(0,L]$ since $u_\infty$ is nonincreasing in $[0,L]$, therefore our claim is proved.  On the other hand, we have that \begin{equation}\label{48}
\left\{
\begin{array}{ll}
D_2v''_\infty-v_\infty=0, & x \in(x_0,L) ,\\
v'_\infty(L)=0.
\end{array}
\right.
\end{equation}
Therefore $v''_\infty(x)>0$ in $(x_0,L)$ and we have that $\lim_{x\rightarrow x_0^{+}}v'_\infty(x)<0$, however this is impossible since $\lim_{x\rightarrow x_0^{-}}v'_\infty(x)=0$ and $v_\infty(x) \in C^1([0,L])$.  Therefore we must have that $u_i\rightarrow0$ in $(0,L]$.  Moreover, it is easy to see from Lemma \ref{lemma22} that $u_i(0)>\bar u$ hence $u_\infty(0)\geq\bar u$.

\emph{Proof of (ii) of Theorem} \ref{theorem41}:
In light of (\ref{215}) in Lemma \ref{lemma23} and Lemma \ref{lemma42}, we see that as $\frac{\chi_i}{D_{1,i}} \rightarrow a\in[0,\infty)$, $u''_i$ and $v''_i$ are uniformly bounded for all $i$.  By Azela-Ascoli theorem, $u_i \rightarrow u_\infty$ in $C^1([0,L])$ as $i\rightarrow \infty$, after passing to a subsequence.  Then we conclude from (\ref{45}) that
\[u'_i(x)-\frac{\chi_i}{D_{1,i}}\Phi(u_i,v_i)v'_i(x)=\frac{1}{D_{1,i}}F_i(x).\]
Sending $i$ to $\infty$, we readily have that
\[u'_\infty-a\Phi(u_\infty,v_\infty)v'_\infty=0, \forall x\in(0,L).\]
Similarly we can show that $v_\infty$ satisfies (\ref{47}).

On the other hand, we integrate the $u_i$ equation over $(0,L)$ and have that $\int_0^Lu_i(\bar u-u_i)=0$.  Applying the Lebesgue's dominated convergence theorem we have that
\[\int_0^Lu_\infty(\bar u-u_\infty)=0.\]
Remind that $u_i(0)>\bar u$ hence $u_\infty(0)\geq \bar u$.  Therefore if $u_\infty$ is a constant it must be $\bar u$ hence $\bar v$ equals $h(\bar u)$ according to (\ref{47}).  If $u_\infty$ and $v_\infty$) are not constants, it is easy to see that they satisfy (\ref{43}).  To show that $u_\infty(x)$ and $v_\infty(x)$ are strictly positive on $[0,L)$ and $[0,L]$ respectively, we argue by contradiction.  If $v_\infty(L)=0$, we reach a contradiction to Hopf's boundary lemma in (\ref{47}), unless $v_\infty\equiv0$ which is impossible; if $v_\infty(x)=0$ in $[0,x_0)$ for some $x_0\in (0,L]$, we have that $v'_\infty(x_0)=0$ and again we reach a contradiction.  Therefore $v_\infty(x)>0$ for $x\in[0,L]$.  To show $u_\infty(x)>0$ in $[0,L)$, we suppose that there exists $x_0\in[0,L)$ such that $u_\infty\equiv0$ in $(x_0,L]$, then we must have from (\ref{47}) that $v_\infty\equiv 0$ in $(x_0,L)$ which is impossible.  Therefore \emph{(ii)} is proved.
\end{proof}
\begin{remark}
In both case (i) and case (ii), we know from that Fatou's Lemma and (\ref{42}) that the limit of total cell population must be finite in the limit of large chemotaxis attraction and/or cell motility.  Apparently this is due to the presence of logistic kinetic term from which (\ref{42}) applies.  In case (i), when $u_i$ converges to a boundary spike at $x=0$, we know that $u_\infty(0)>\bar u$ thanks to Lemma \ref{lemma22} and it is unknown whether or not it can be $\infty$.  Therefore a $\delta$-type aggregation/singularity at $x=0$ is possible in this case.  However, in case (ii), we have that $u_\infty$ is always bounded in $[0,L]$ in virtue of Lemma \ref{lemma22}, hence a $\delta$-type aggregation is impossible in this case.
\end{remark}

\section{Conclusion and discussion}\label{section5}
In this paper, we establish the sufficient condition $\chi>\min_{k\in\mathbb{N}^+}\chi_k$ for the existence of nonconstant positive solutions of (\ref{11}).  This condition is the same as that when the constant solution $(\bar u,\bar v)$ of (\ref{11}) loses its stability to nonconstant positive solutions.  See Proposition \ref{proposition1}.  We carry out global analysis of local bifurcation branches and show that they always stay within the first quadrant of $(\mathcal{X}\times\mathcal{X})\times \mathbb{R}$ and all non-compact continuum can extend to infinity only in the positive direction of the $\chi$-axis.  Stability of the bifurcating solutions around $(\bar u,\bar u,\chi_k)$ has also been investigated rigorously.  It is shown that the bifurcation diagram of (\ref{11}) is of pitchfork type;  see Proposition \ref{proposition2}.  However, due to the complexity and difficulty in computations, we only consider the simpler model (\ref{316}) and establish the stability criteria of the bifurcating solutions $(u_k(s,x),v_k(s,x))$.  Our results show that if one of the diffusion rates $D_1$ and $D_2$ is small, the small amplitude solution  $(u_k(s,x),v_k(s,x))$ is unstable.  If the cell motility $D_1$ is large, the bifurcating solution is stable if $D_2<\frac{1}{14}(\frac{L}{k\pi})^2$ and unstable if $D_2>\frac{1}{14}(\frac{L}{k\pi})^2$.  Therefore, we may expect that system (\ref{11}) admits large amplitude solutions in these cases.  Moreover, we establish the existence of nonconstant positive solutions of (\ref{11}) with large amplitude, which has also been formally presented in \cite{MOW}.  Compared with the models studied by Wang and Xu \cite{WX}, our model with logistic cellular growth does not have the feature that the cell population is preserved.  However the logistic growth prevents the solutions from blowing up into a $\delta$-function.  From the viewpoint of mathematical analysis, the logistic growth term inhibits the application of Sturm oscillation theory to (\ref{12}), which is an essential tool that has been used in \cite{WX} to show the emergence of a $\delta$ function as $\chi/D_1 \rightarrow \infty$.

There are also some interesting questions that have not been considered in our paper.  The existence and the structure of nonconstant positive solution to (\ref{43}) can be an interesting question to probe in the future.  Moreover, the stability of the solutions with patterns is also a very interesting and delicate problem that deserves future attention (e.g. \cite{KWA}).  Our results in Theorem \ref{theorem41} suggest that the strength of chemotaxis and cell motility play very important roles in determining the total cell population, therefore it is mathematically interesting and biologically important to find or characterize optimal (large) chemotaxis and diffusion rate that maximize the total cell population.  Obviously one can also investigate the models over a multi-dimensional domain, even for $\Omega$ with special geometries.

\medskip
\medskip

\end{document}